% ----------------------------------------------------------------
% AMS-LaTeX Paper ************************************************
% **** -----------------------------------------------------------
\documentclass{amsart}
\usepackage{graphicx}
\usepackage{mathabx}
\usepackage{comment}
\usepackage{hyperref}

%\usepackage{biblatex}
%\addbibresource{references.bib}

\usepackage[dvipsnames,x11names,svgnames]{xcolor} %UUS!
\definecolor{celadon}{rgb}{0.67, 0.88, 0.69}
\colorlet{kollane}{LightGoldenrod1}
%\colorlet{roheline}{YellowGreen}
%\colorlet{roheline}{PaleGreen2}
%\colorlet{roheline}{DarkSeaGreen1}
\colorlet{roheline}{celadon}
%\colorlet{sinine}{CornflowerBlue}
\colorlet{sinine}{LightSkyBlue2}
\usepackage[framemethod=tikz]{mdframed}

\newcommand{\N}{{\mathbb N}}
\newcommand{\R}{{\mathbb R}}

\DeclareMathOperator{\Lip}{Lip}
\DeclareMathOperator{\spanop}{span}

% ----------------------------------------------------------------
\vfuzz2Pt % Don't report over-full v-boxes if over-edge is small
\hfuzz2pt % Don't report over-full h-boxes if over-edge is small
% THEOREMS -------------------------------------------------------
\newtheorem{thm}{Theorem}[section]

\newtheorem{cor}[thm]{Corollary}

\newtheorem{lemma}[thm]{Lemma}
\newtheorem{prop}[thm]{Proposition}
\theoremstyle{definition}
\newtheorem{defn}[thm]{Definition}
\newtheorem{ex}[thm]{Example}
\theoremstyle{remark}
\newtheorem*{rem}{Remark}

\numberwithin{equation}{section}
% MATH -----------------------------------------------------------

\newcommand{\eps}{\varepsilon}

% ----------------------------------------------------------------
\begin{document}

\title[Diameter two properties for spaces of Lipschitz functions]{Diameter two properties for spaces of\\ Lipschitz functions}%
\author{Rainis Haller}%
\author{Andre Ostrak}%
\author{M\"{a}rt P\~{o}ldvere}%
\address{Institute of Mathematics and Statistics, University of Tartu, Narva mnt 18, 51009, Tartu, Estonia}%
\email{rainis.haller@ut.ee, andre.ostrak@ut.ee, mart.poldvere@ut.ee}%

\thanks{}%
\subjclass{Primary 46B04; Secondary 46B20}%
\keywords{Lipschitz functions spaces, diameter two properties, de Leeuw's transform, Daugavet points}%

%\date{}%
%\dedicatory{}%
%\commby{}%
% ----------------------------------------------------------------
\begin{abstract}
We solve some open problems regarding diameter two properties within the class of Banach spaces of real-valued Lipschitz functions by using the de Leeuw transform. Namely, we show that: the diameter two property, the strong diameter two property, and the symmetric strong diameter two property are all different for these spaces of Lipschitz functions; the space $\Lip_0(K_n)$ has the symmetric strong diameter two property for every $n\in \N$, including the case of $n=2$; every local norm-one Lipschitz function is a Daugavet point.
\end{abstract}
\maketitle
% ----------------------------------------------------------------

\section{Introduction}
Let $X$ be a real nontrivial Banach space. 
We denote the closed unit ball, the unit sphere, and the dual space of $X$ by $B_X$, $S_X$, and $X^*$, respectively. 
A \emph{slice} of $B_{X}$ is a set of the form
\[
S(x^*, \alpha)\coloneq \{x\in B_{X}\colon x^*(x)>1-\alpha\},
\]
where $x^*\in S_{X^*}$ and $\alpha>0$. 
If $X$ is a dual space, 
then slices whose defining functional comes from  (the canonical image of) the predual of $X$ are called weak$^*$ slices.

According to the terminology in \cite{MR3098474} and \cite{MR4023351} (see also \cite{MR3334951}), the Banach space $X$ has the
\begin{itemize}\label{def: d2p-s}
\item \emph{slice diameter $2$ property} (briefly, \emph{slice-D$2$P}) if every slice of $B_X$ has dia\-meter $2$;
\item \emph{diameter $2$ property} (briefly, \emph{D$2$P}) if every nonempty relatively weakly open subset
of $B_X$ has diameter $2$;
\item \emph{strong diameter $2$ property} (briefly, \emph{SD$2$P}) 
if every convex combination of slices of $B_X$ has diameter $2$, 
i.e., the diameter of $\sum_{i=1}^n \lambda_i S_i$ is $2$ 
whenever $n\in\mathbb N$, $\lambda_1,\dotsc,\lambda_n\geq 0$ with $\sum_{i=1}^n\lambda_i=1$, 
and $S_1,\dotsc,S_n$ are slices of $B_X$;
\item\emph{symmetric strong diameter $2$ property} (briefly, \emph{SSD$2$P}) if, 
for every $n\in\mathbb N$, every family $\{S_1,\dotsc, S_n\}$ of slices of $B_X$, and every $\varepsilon>0$, 
there exist $f_1\in S_1,\dotsc, f_n\in S_n$, and $g\in B_{X}$ with $\|g\|>1-\varepsilon$ 
such that $f_i\pm g\in S_i$ for every $i\in \{1,\ldots,n\}$.
\end{itemize}
If $X$ is a dual space, then we also consider the weak$^*$ versions of these diameter two properties ($w^
*$-slice D$2$P, $w^*$-D$2$P, $w^*$-SD$2$P, and $w^*$-SSD$2$P), where slices and weakly open subsets in the above definitions are replaced by weak$^*$ slices and weak$^*$ open subsets, respectively. 

In this paper we study diameter two properties in the space $\Lip_0(M)$.
Let $M$ be a pointed metric space, that is, a metric space with a fixed point $0$. The space $\Lip_0(M)$ is the Banach space of all Lipschitz functions $f\colon M\to \mathbb{R}$ with $f(0)=0$ equipped with the norm
\[
\|f\| =\sup\left\{ \frac{|f(x)-f(y)|}{d(x,y)}\colon x,y\in M, x\neq y\right\},
\]
i.e., $\|f\|$ is the Lipschitz constant of $f$.
The (sub)space $\mathcal{F}(M)\coloneq \overline{\spanop}\left\{\delta_x \colon x\in M \right\}$ of $\Lip_0(M)^*$ is called the \emph{Lipschitz-free space over $M$,} where $\langle f, \delta_x\rangle=f(x)$ for every $f\in\Lip_0(M)$. It can be shown that, under this duality, $\mathcal{F}(M)^*$ is isometrically isomorphic to $\Lip_0(M)$.

Diameter two properties of $\Lip_0(M)$ have been studied
in \cite{zbMATH05159098}, \cite{MR3803112}, \cite{MR3917942}, \cite{MR4233633}, \cite{MR3985517},
and \cite{MR4093788}.
In \cite{zbMATH05159098}, Ivakhno proved that if a metric space $M$ is unbounded or not uniformly discrete, then the space $\Lip_0(M)$ has the slice-D$2$P. Recall that $M$ is said to be \emph{uniformly discrete} if $\inf\{d(x,y)\colon x,y\in M,\, x\neq y\}>0$.
In \cite{MR3803112}, Procházka and Rueda Zoca  introduced a property of metric spaces that they called
the \emph{long trapezoid property} (briefly, \emph{LTP};  see the definition on page~\pageref{def: LTP} below),
and proved that $\Lip_0(M)$ has the $w^\ast$-SD$2$P if and only if $M$ has the LTP.
In \cite{MR3917942}, Haller et al. proved that, for every $n\in\mathbb N$, the space $\Lip_0(K_n)$ has the $w^\ast$-SSD$2$P
(recall that the metric space $K_n$ is the metric subspace of the space $\ell_\infty$
of sequences with terms in $\{0,1,\dotsc,n\}$).
In \cite{MR4233633}, Ostrak introduced a property of metric spaces that he called
the \emph{strong long trapezoid property} (briefly, \emph{SLTP}; see the definition on page~\pageref{def: SLTP} below),
and proved that $\Lip_0(M)$ has the $w^\ast$-SSD$2$P if and only if $M$ has the SLTP.
He also gave an example of a metric space with the LTP but without the SLTP
thus showing that the $w^\ast$-SD$2$P and the $w^\ast$-SSD$2$P for $\Lip_0(M)$ are different properties.
Note that unbounded metric spaces and not uniformly discrete metric spaces as well as the spaces $K_1, K_2, \dotsc$ all have the SLTP.
In \cite{MR3985517}, Cascales et al. proved that if a metric space $M$ has infinitely many cluster points or $M$ is discrete but not uniformly discrete, then the space $\Lip_0(M)$ has even the SSD$2$P. In \cite{MR4093788}, Langemets and Rueda Zoca generalised this result by proving that the same is true if $M$ is unbounded or not uniformly discrete. 
 
\begin{thm}\label{thm: If M is unbounded or not uniformly discrete, then Lip_0(M) has the SSD2P}%[see {\cite[Theorem~2.2]{MR4093788}}]
If the metric space $M$ is unbounded or not uniformly discrete, then the space $\Lip_0(M)$ has the SSD$2$P.
\end{thm}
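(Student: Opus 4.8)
The plan is to use de Leeuw's transform to realise $\Lip_0(M)$ isometrically as a subspace of $\ell_\infty(\widetilde M)$, where $\widetilde M=\{(x,y):x,y\in M,\ x\neq y\}$ and $(\Phi f)(x,y)=\frac{f(x)-f(y)}{d(x,y)}$, and then to manufacture the required perturbation by a localised bump function that is essentially invisible to the finitely many functionals defining the given slices. Concretely, fix slices $S(\varphi_1,\alpha_1),\dots,S(\varphi_n,\alpha_n)$ and $\varepsilon>0$. Since $\Phi$ is an isometry, each $\varphi_i$ lifts, by Hahn--Banach through the quotient map $\Phi^*$, to a finitely additive measure $\mu_i$ on $\widetilde M$ with $\norm{\mu_i}=1$ and $\varphi_i(f)=\int_{\widetilde M}\Phi f\,d\mu_i$ for every $f\in\Lip_0(M)$. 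The advantage of working with these (finitely additive) measures on $\widetilde M$ itself is that if $\Phi g$ vanishes off a set $T$, then $\varphi_i(g)=\int_T\Phi g\,d\mu_i$ sees only $T$.

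First I would produce pairwise disjoint balls $B_k=B(\omega_k,\rho_k)\subseteq M$ on which norm-one bumps can be built. In the non-uniformly-discrete case, take $\omega_k,\omega_k'$ with $d(\omega_k,\omega_k')=\delta_k\to 0$ and let $g_k$ be a truncated peak, namely the minimum of a cone $\rho_k-d(\cdot,\omega_k)$ with a shifted $\tfrac12\bigl(d(\cdot,\omega_k')-d(\cdot,\omega_k)\bigr)$, which attains Lipschitz slope $1$ across the close pair yet vanishes off $B_k$; in the unbounded case, take $d(0,\omega_k)\to\infty$ and let $g_k(t)=\max(0,\rho_k-d(t,\omega_k))$, a cone of norm $1$ supported in $B_k$ with $g_k(0)=0$. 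Passing to a subsequence makes the $B_k$ disjoint. The key point is that $\Phi g_k$ is then supported on $T_k:=\{(x,y):x\in B_k\ \text{or}\ y\in B_k\}$, and since the $B_k$ are disjoint each pair lies in at most two of the sets $T_k$; hence $\sum_k\abs{\mu_i}(T_k)\le 2$ for every $i$, so $\sum_{i=1}^n\abs{\mu_i}(T_k)\to 0$. I fix $k$ with this sum $<\min_i\alpha_i/2$ and set $g=g_k$, $B=B_k$, $T=T_k$. Then $\abs{\varphi_i(g)}=\bigl|\int_T\Phi g\,d\mu_i\bigr|\le\abs{\mu_i}(T)$ is small for each $i$.

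Next I choose the slice elements. Pick $\hat f_i\in B_{\Lip_0(M)}$ with $\varphi_i(\hat f_i)>1-\alpha_i/4$ and flatten each $\hat f_i$ near $\omega_k$: replace it by $f_i\in B_{\Lip_0(M)}$ that is constant on $B$, agrees with $\hat f_i$ outside a slightly larger ball $B(\omega_k,\rho_k')$, and differs from $\hat f_i$ only on pairs meeting $B(\omega_k,\rho_k')$; smallness of the $\abs{\mu_i}$-mass on those pairs keeps $\varphi_i(f_i)>1-\alpha_i/2$. Because $f_i$ is flat on $B$ while $\Phi g$ lives on $T$, the only pairs where both $\Phi f_i$ and $\Phi g$ are appreciable are those with one endpoint in $B$ and the other just outside; choosing $\rho_k'\gg\rho_k$ forces $\Phi g\le\rho_k/(\rho_k'-\rho_k)<\varepsilon$ on the remaining far pairs, so $\norm{f_i\pm g}\le 1+\varepsilon$, and a final rescaling by $(1+\varepsilon)^{-1}$, which keeps each $f_i$ in its slice with room to spare, gives $\norm{f_i\pm g}\le 1$. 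Together with $\varphi_i(f_i\pm g)=\varphi_i(f_i)\pm\varphi_i(g)>1-\alpha_i$, this yields $f_i\pm g\in S_i$ while $\norm{g}>1-\varepsilon$, which is precisely the SSD$2$P.

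The main obstacle I expect is the simultaneous control of $\norm{f_i\pm g}\le 1$. The bump $g$ cannot be concentrated at a single point of $\widetilde M$, since no Lipschitz function has its de Leeuw transform supported on one pair, so its transform unavoidably spreads over all pairs meeting $B$, and one must arrange that on precisely these pairs every $f_i$ is flat. This is what forces the two-radius flattening $\rho_k<\rho_k'$ and the separation of near boundary pairs (handled by flatness of $f_i$) from far boundary pairs (handled by the smallness of $\Phi g$ at large distances). Verifying that the flattened $f_i$ remains $1$-Lipschitz and perturbs $\varphi_i$ only through pairs of small $\abs{\mu_i}$-mass is the delicate routine computation; the rest is bookkeeping around the pigeonhole $\sum_i\abs{\mu_i}(T_k)\to 0$ above.
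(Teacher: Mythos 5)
Your overall framework is the same as the paper's: realise $\Lip_0(M)^*$ via de Leeuw's transform as restrictions of finitely additive measures on $\widetilde M$, use disjointness of infinitely many candidate regions plus a pigeonhole on $\sum_k\abs{\mu_i}(T_k)\le 2$ to find one region of small mass, put a bump $g$ there, and flatten the slice elements on that region (this is exactly the paper's Lemma~\ref{lem: main lemma for detecting the SSD2P}). The gap is in the step you yourself flag as ``delicate routine computation'': the simultaneous bound $\norm{f_i\pm g}\le 1+\eps$. Your mechanism --- $f_i$ constant on $B=B(\omega_k,\rho_k)$, equal to $\hat f_i$ outside $B(\omega_k,\rho_k')$ with $\rho_k'\gg\rho_k$, near pairs handled by ``flatness'' and far pairs by $\Phi g\le\rho_k/(\rho_k'-\rho_k)$ --- does not close the argument. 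The dangerous pairs are $(x,y)$ with $x\in B$ and $y$ in the annulus $B(\omega_k,\rho_k')\setminus B$: there $g(x)/d(x,y)$ can be close to $1$ (take $x=\omega_k$ and $d(\omega_k,y)$ barely above $\rho_k$), while $f_i(y)$ is \emph{forced}, by $1$-Lipschitz compatibility with the prescribed values $\hat f_i(z)$ for $z$ outside $B(\omega_k,\rho_k')$, to satisfy $f_i(y)\ge\hat f_i(z)-d(z,y)$; if $y$ lies metrically between $\omega_k$ and such a $z$ along which $\hat f_i$ increases at rate close to $1$, then $\abs{\Phi f_i(\omega_k,y)}$ is also close to $1$ with a sign that makes one of $f_i\pm g$ have slope close to $2$ on $(\omega_k,y)$. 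No choice of $\rho_k'$ fixes this, because the obstruction lives at distance $\approx\rho_k$ from $\omega_k$. Moreover, the very existence of a constant $c$ compatible with $\hat f_i$ outside the flattening region is not automatic; it is a nontrivial constraint linking the bump height to the geometry of $M\setminus A$ and to $\norm{\hat f_i}\le 1-\delta$.

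This is precisely the point where the paper has to work: it defines $r_0=\tfrac12\inf_{x,y\in M\setminus A}\bigl(d(x,u)+d(y,u)-(1-\delta)d(x,y)\bigr)$ (and similarly $s_0$ at $v$) as the largest admissible bump radii for which the flattening constant $c_i$ exists, and then needs the trapezoid inequalities \eqref{eq: LTP} and \eqref{eq: SLTP} --- verified for unbounded or non-uniformly-discrete $M$ by taking $A$ to be suitable disjoint annuli or balls (Lemma~\ref{lem: LTP ja SLTP ketta jaoks} and Proposition~\ref{prop: M unbounded or not uniformly discrete is seq-SLTP}) --- to guarantee $r_0+s_0\ge(1-\delta)d(u,v)$, so that a \emph{two-sided} bump (a positive cone of height $r$ at $u$ and a negative cone of depth $s$ at $v$, $r+s=(1-\delta)d(u,v)$) still has norm $\ge 1-\delta$ even though each individual radius may be forced to be small. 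Two further issues in your write-up: your non-uniformly-discrete bump $\min\bigl(\rho_k-d(\cdot,\omega_k),\tfrac12(d(\cdot,\omega_k')-d(\cdot,\omega_k))\bigr)$ does not vanish off $B_k$ (it is negative wherever $d(\cdot,\omega_k')<d(\cdot,\omega_k)$ far away), and the functionals $\varphi_i(f_i)$ are perturbed on pairs meeting the \emph{large} balls $B(\omega_k,\rho_k')$, which your pigeonhole does not control --- and which cannot be made pairwise disjoint when the $\omega_k$ accumulate at a limit point. To repair the proof you essentially need to reintroduce the quantities $r_0,s_0$ and the trapezoid inequalities, i.e.\ the paper's seq-SLTP route.
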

\noindent
Theorem \ref{thm: If M is unbounded or not uniformly discrete, then Lip_0(M) has the SSD2P} leaves open 
for which bounded but not uniformly discrete metric spaces $M$ the space $\Lip_0(M)$
has the SSD$2$P (or SD$2$P or D$2$P or slice-D$2$P).
In \cite{MR4093788}% [Propositions~2.7 and 2.8]
, Langemets and Rueda Zoca proved that 
the space $\Lip_0(K_n)$ has the SSD$2$P whenever $n\in\N\setminus\{2\}$. 
% This answered a question posed by Ivakhno \cite[p 114]{zbMATH05159098}, 
% but left open whether the space $\Lip_0(K_2)$ has the (S)SD$2$P (see \cite[the discussion after the proof of Proposition~2.7]{MR4093788}).
In this paper (see Theorem~\ref{thm: SSD2P from seq-SLTP} and Proposition~\ref{prop: MR-l K_n on alati seq-SLTP} below), we show that, in fact, this is true for \emph{every} $n\in\N$, including the case of $n=2$.

\begin{thm}\label{thm: ruumil Lip_0(K_n) on alati SSD2P}%
For every $n\in\N$, the space $\Lip_0(K_n)$ has the SSD2P.
\end{thm}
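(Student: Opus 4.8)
The plan is to establish the SSD$2$P through the de Leeuw transform, following the scheme anticipated by Theorem~\ref{thm: SSD2P from seq-SLTP} and Proposition~\ref{prop: MR-l K_n on alati seq-SLTP}. Recall that the de Leeuw transform
\[
\Phi f(x,y)=\frac{f(x)-f(y)}{d(x,y)},\qquad (x,y)\in\widetilde M\coloneq\{(x,y)\in M\times M\colon x\neq y\},
\]
embeds $\Lip_0(M)$ isometrically into $\ell_\infty(\widetilde M)$, so that $\norm{f}=\sup_{\widetilde M}\abs{\Phi f}$. Dualising and invoking the Hahn--Banach theorem, every norm-one functional defining a slice is represented by a norm-one finitely additive measure $\mu$ on $\widetilde M$ with $\varphi(f)=\int_{\widetilde M}\Phi f\,d\mu$; equivalently, $\mu$ is a regular Borel measure on $\beta\widetilde M$ acting on the continuous extension of $\Phi f$. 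Thus, given slices $S_i=S(\varphi_i,\alpha_i)$ ($i=1,\dots,m$) and $\eps>0$, verifying the SSD$2$P amounts to producing near-norming functions $f_i$ (so that $\varphi_i(f_i)>1-\alpha_i/2$) together with a single $g$ satisfying $\norm{g}>1-\eps$, the pointwise bound $\abs{\Phi f_i}+\abs{\Phi g}\le 1$ on $\widetilde M$ (which forces $f_i\pm g\in B_{\Lip_0(M)}$), and $\abs{\varphi_i(g)}<\alpha_i/2$ (which keeps $f_i\pm g$ inside $S_i$).

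I would isolate the metric content in a sufficient, \emph{sequential} trapezoid-type condition on $M$: for every finite family of pairs in $\widetilde M$ and every $\eta>0$ there is a sequence of mutually ``$\Phi$-independent'' fresh pairs $(u_\ell,v_\ell)$ on which one can raise a Lipschitz bump of norm close to $1$ that is $\Phi$-negligible on all the given pairs. The point of passing to a whole sequence, rather than a single trapezoid as in the $w^\ast$-theory, is that a fixed representing measure $\mu_i$ has total variation $1$ and can therefore carry non-negligible mass on only finitely many of the mutually $\Phi$-independent fresh pairs; choosing one index $\ell$ that all of $\mu_1,\dots,\mu_m$ avoid yields $\abs{\varphi_i(g)}$ small for the bump $g$ built on $(u_\ell,v_\ell)$. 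Handling at the same time the part of each $\mu_i$ living on the remainder $\beta\widetilde M\setminus\widetilde M$ --- the asymptotic behaviour of $\varphi_i$ along pairs escaping through ever-higher coordinates --- is the delicate point that the sequential design is meant to address, and it is precisely this remainder mass that separates the genuine (norm) SSD$2$P from its $w^\ast$-version already known for $\Lip_0(K_n)$.

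To verify the condition for $M=K_n$ I would exploit the product structure $K_n=\{0,1,\dots,n\}^{\N}$: partition $\N$ into infinitely many infinite blocks to obtain infinitely many isometric copies of $K_n$ sharing only $0$ and pairwise independent for the $\ell_\infty$-metric, and locate the bumps on fresh blocks. Because the $f_i$ may be taken to ignore the chosen fresh block (adjusting a near-norming $f_i$ there costs only the $\mu_i$-mass on fresh pairs, which is negligible by the selection above), $\Phi f_i$ vanishes on pairs differing only inside that block, so $\abs{\Phi f_i}+\abs{\Phi g}\le 1$ holds trivially on such pairs; the work is to design $g$ as a localized ramp that turns on only once the fresh coordinates are activated, so that the inequality survives on the mixed pairs as well.

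The main obstacle, and the reason the case $n=2$ resisted earlier methods, lies exactly in these mixed pairs. For the $\ell_\infty$-metric two functions varying on disjoint coordinate blocks can reinforce: on a pair moving equally in both blocks the quotient in $\abs{\Phi f_i}+\abs{\Phi g}$ can approach $2$. Avoiding this forces $g$ to be genuinely $\Phi$-disjoint from each near-optimal $f_i$ --- that is, $g$ must be locally constant wherever $\varphi_i$ concentrates --- while still attaining norm close to $1$; with diameter only $2$ there is very little room to achieve both, and the ramps that succeed for $n\neq 2$ break down. Overcoming this by a careful choice of localized bumps on fresh blocks, combined with the sequential selection that defeats the representing measures (including their remainder part), is the crux; the remaining estimates are routine.
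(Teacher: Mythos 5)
Your overall architecture coincides with the paper's: represent the slice-defining functionals, via the de Leeuw transform and Hahn--Banach, as finitely additive measures of total variation one on $\widetilde M$; prepare infinitely many pairwise disjoint candidate sets on which to build the bump $g$; and use the pigeonhole principle (a finite non-negative finitely additive measure can give mass $\geq\delta$ to only finitely many pairwise disjoint sets) to select one candidate that all the measures essentially ignore. This is exactly the content of Lemma~\ref{lem: main lemma for detecting the SSD2P} and of the seq-SLTP in Definition~\ref{def: seq-SLTP}; note that your worry about mass on $\beta\widetilde M\setminus\widetilde M$ needs no separate treatment, since finite additivity together with disjointness of the candidate sets already yields the selection.

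There is, however, a genuine gap: the entire mathematical content of the theorem for $K_n$ is the explicit construction that you defer as ``the crux''. You correctly diagnose that on mixed pairs the sup-metric lets $\abs{\Phi f_i}+\abs{\Phi g}$ approach $2$, but you exhibit neither the sets, nor the pairs $(u_m,v_m)$, nor the bump that avoid this; and the construction you do sketch --- isometric copies of $K_n$ on disjoint coordinate blocks --- does not survive as stated. For instance, taking $A_m$ to be the set of sequences supported on a block $B_m$ and $u_m=ne_j$, $v_m=ne_k$ with $j,k\in B_m$, the points $x=ne_j+e_1$ and $y=ne_k+e_2$ (with $1,2\notin B_m$) lie outside $A_m$ yet satisfy $d(x,y)=n$ and $d(x,u_m)=d(y,v_m)=1$, so the trapezoid inequality $(1-\eps)\bigl(d(x,y)+d(u_m,v_m)\bigr)\leq d(x,u_m)+d(y,v_m)$ that drives the extension argument fails for every $n\geq2$. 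The paper's Proposition~\ref{prop: MR-l K_n on alati seq-SLTP} resolves this precisely by \emph{not} taking $A_m$ to be a coordinate block: it takes $A_m$ to be the set of all $x\in K_n$ whose maximal coordinate equals $n$ and is attained only in positions $2m-1$ and $2m$, with $u_m=ne_{2m-1}+(n-1)e_{2m}$ and $v_m=(n-1)e_{2m-1}+ne_{2m}$, so that $d(u_m,v_m)=1$ and every point that would violate the naive estimate is swept into $A_m$; the trapezoid inequalities then hold even with $\eps=0$, and Theorem~\ref{thm: SSD2P from seq-SLTP} converts them into a compatible bump $g$ and extensions $f_i$ with $\norm{f_i\pm g}\leq1$. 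Without this (or an equivalent) choice, your plan identifies where the difficulty sits but does not yet constitute a proof.
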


% Our proofs of Theorems \ref{thm: Lip_0(K_2) has the SS2P}
% and~\ref{thm: two classes of metric spaces for which Lip_0(M) has the SSD2}
% (as well as the proof of Theorem \ref{thm: If f in Lip_0(M) is local, then f is a Daugavet point} below)
% heavily rely on the following well known\kollane{(?)} observation.
% Assume that $M$ is a pointed metric space.

\noindent 
% As previously stated, it has been shown that the $w^\ast$-SD$2$P and the $w^\ast$-SSD$2$P for $\Lip_0(M)$ are different properties. 
In most cases, it seems to be unknown whether for $\Lip_0(M)$ the above-mentioned diameter two properties differ from each other.
%The differences between the above-mentioned diameter two properties have so far been unknown for $\Lip_0(M)$. 
E.g., in \cite[Introduction]{MR4093788}, the authors say that 
it is not known whether the slice-D$2$P implies the SSD$2$P within the class of spaces of Lipschitz functions, 
and it is not known whether the SSD$2$P and the $w^*$-SSD$2$P coincide in general. 
Our Example~\ref{ex: seq-LTP but not SLTP}, combined with Theorem~\ref{thm: SD2P from seq-LTP}, 
shows that the SD$2$P does not follow from $w^*$-SSD$2$P for the spaces of Lipschitz functions. 
Our Example~\ref{ex: D2P but not LTP}, combined with Lemma~\ref{lem: main lemma for detecting the D2P}, 
shows that the D$2$P does not follow from the $w^*$-SD$2$P for the spaces of Lipschitz functions. 
Therefore, we have the following.

\begin{thm}
The SSD$2$P, the SD$2$P, and the D$2$P are three different properties for the spaces of Lipschitz functions. In fact, the $w^\ast$-SSD$2$P and the SD$2$P are different, and the $w^\ast$-SD$2$P and the D$2$P are different for the spaces of Lipschitz functions.
\end{thm}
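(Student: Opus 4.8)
The plan is to read off every required distinction from two witnessing metric spaces, produced by Example~\ref{ex: seq-LTP but not SLTP} and Example~\ref{ex: D2P but not LTP}, together with the standard chain of implications SSD$2$P $\Rightarrow$ SD$2$P $\Rightarrow$ D$2$P and the fact that each of these properties implies its own weak$^\ast$ version (every weak$^\ast$ slice is a slice, and every weak$^\ast$ open set is weakly open, so a diameter-two requirement imposed on the larger family forces it on the smaller one). Once I exhibit two spaces that sit strictly between consecutive levels of this chain, the ``three different properties'' assertion and both ``in fact'' assertions follow by bookkeeping.

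First I would treat the gap below SD$2$P using Example~\ref{ex: seq-LTP but not SLTP}, a metric space $M_1$ that has the seq-LTP but fails the SLTP. By Theorem~\ref{thm: SD2P from seq-LTP}, the seq-LTP gives that $\Lip_0(M_1)$ has the SD$2$P, while by Ostrak's characterisation \cite{MR4233633} the failure of the SLTP gives that $\Lip_0(M_1)$ does \emph{not} have the $w^\ast$-SSD$2$P. Hence the $w^\ast$-SSD$2$P and the SD$2$P do not coincide. Moreover, since the SSD$2$P implies the $w^\ast$-SSD$2$P, the space $\Lip_0(M_1)$ also fails the SSD$2$P; thus $\Lip_0(M_1)$ has the SD$2$P but not the SSD$2$P, separating those two properties as well.

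Next I would treat the gap below D$2$P using Example~\ref{ex: D2P but not LTP}, a metric space $M_2$ that fails the LTP. By Lemma~\ref{lem: main lemma for detecting the D2P} the space $\Lip_0(M_2)$ has the D$2$P, whereas by the Proch\'azka--Rueda Zoca characterisation \cite{MR3803112} the failure of the LTP gives that $\Lip_0(M_2)$ does not have the $w^\ast$-SD$2$P. Hence the $w^\ast$-SD$2$P and the D$2$P do not coincide, and since the SD$2$P implies the $w^\ast$-SD$2$P, the space $\Lip_0(M_2)$ also fails the SD$2$P; thus $\Lip_0(M_2)$ has the D$2$P but not the SD$2$P. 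Combining the two steps with the implication chain yields that SSD$2$P $\subsetneq$ SD$2$P $\subsetneq$ D$2$P, so the three are pairwise distinct, while the two displayed non-coincidences are precisely the two ``in fact'' claims.

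The hard part is not this final assembly, which is essentially bookkeeping, but the three ingredients it consumes and which I am allowed to assume here: the construction of the two witnessing spaces and the verification of their trapezoid properties (seq-LTP together with the failure of SLTP for $M_1$, and the failure of LTP for $M_2$), and the proof that Lemma~\ref{lem: main lemma for detecting the D2P} genuinely detects the D$2$P for $\Lip_0(M_2)$. Within the deduction itself the only point that needs care is the \emph{direction} of the norm-versus-weak$^\ast$ implications: I use them contrapositively, transporting the \emph{failure} of a weak$^\ast$ property upward to the failure of its formally stronger norm counterpart. I therefore expect the metric-space analysis behind the two examples, rather than the present argument, to be where the real work lies.
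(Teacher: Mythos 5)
Your proposal is correct and follows essentially the same route as the paper: the authors likewise obtain the separation from Example~\ref{ex: seq-LTP but not SLTP} together with Theorem~\ref{thm: SD2P from seq-LTP} and the SLTP characterisation of the $w^\ast$-SSD$2$P, and from Example~\ref{ex: D2P but not LTP} together with Lemma~\ref{lem: main lemma for detecting the D2P} and the LTP characterisation of the $w^\ast$-SD$2$P, using the same contrapositive transport of failures from the weak$^\ast$ properties to their norm counterparts. Your bookkeeping with the chain SSD$2$P $\Rightarrow$ SD$2$P $\Rightarrow$ D$2$P to get pairwise distinctness is exactly what the paper intends.
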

\noindent

\noindent It remains open whether, for the spaces of Lipschitz functions, any of the above-mentioned (non-weak$^\ast$) diameter two properties coincides with its weak$^\ast$ version. In fact, we don't even know if, for the spaces of Lipschitz functions, the $w^*$-SSD$2$P implies the slice-D$2$P. It also remains open whether there exists a metric space $M$ such that $\Lip_0(M)$ has the slice-D$2$P but not the D$2$P.

In \cite{MJAR}, Jung and Rueda Zoca studied Daugavet points in $\Lip_0(M)$ in connection with locality properties of Lipschitz functions. They posed and addressed the question of whether every local norm-one $f$ in $\Lip_0(M)$ is a Daugavet point. We answer that question with the following theorem.

\begin{thm}[cf. {\cite[Proposition 3.4 and Theorem 3.6]{MJAR}}]\label{thm: If f in Lip_0(M) is local, then f is a Daugavet point}
Let $M$ be a pointed metric space. If $f\in S_{\Lip_0(M)}$ is local,
then $f$ is a Daugavet point.
\end{thm}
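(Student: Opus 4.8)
The plan is to verify the Daugavet point condition straight from its definition: given any slice $S(\phi,\alpha)$ of $B_{\Lip_0(M)}$, with $\phi\in S_{\Lip_0(M)^*}$ and $\alpha>0$, and any $\eps>0$, I want to produce $g\in S(\phi,\alpha)$ with $\norm{f-g}>2-\eps$. The engine is the de Leeuw transform $\Phi\colon\Lip_0(M)\to C_b(\widetilde M)$, $\Phi g(x,y)=\frac{g(x)-g(y)}{d(x,y)}$ on $\widetilde M=\set{(x,y)\in M\times M\colon x\neq y}$, which is a linear isometry onto its image, so $\norm g=\norm{\Phi g}_\infty$. Extending each $\Phi g$ continuously to a compactification $\widetilde M^{*}$ of $\widetilde M$ and applying Hahn--Banach, I represent $\phi$ by a regular Borel measure $\mu$ on $\widetilde M^{*}$ with $\norm\mu=\norm\phi=1$ and $\phi(g)=\int\overline{\Phi g}\,d\mu$ for all $g\in\Lip_0(M)$. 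Locality of $f$ says exactly that $\Phi f$ takes values arbitrarily close to $1$ on pairs of arbitrarily small distance: for all $\eps,\rho>0$ there is $(u,v)\in\widetilde M$ with $d(u,v)<\rho$ and $\Phi f(u,v)>1-\eps$.

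First I fix a point deep in the slice: since $\norm\phi=1$, there is $g_1\in B_{\Lip_0(M)}$ with $\phi(g_1)>1-\eta$ for a small $\eta<\alpha$ to be calibrated. Next I pick a local pair $(u,v)$ with $t=d(u,v)$ small and $\Phi f(u,v)>1-\eps/2$, and I bend $g_1$ near it: I seek $g=g_1+\psi$ with $\psi$ supported on $R=B(u,\kappa t)\cup B(v,\kappa t)$, with the small sup-norm bound $\norm\psi_\infty\le 2t$, and with $g(u)-g(v)\le(-1+\eps/2)\,t$, so that $\Phi(f-g)(u,v)>2-\eps$ and hence $\norm{f-g}>2-\eps$. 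Keeping $\norm g\le1$ while prescribing this reversed slope on $(u,v)$ and matching $g_1$ off $R$ is a McShane-type extension problem, solvable because the scale $t$ is small and the prescribed data are $1$-Lipschitz on their separated pieces. The crucial point of the sup-norm control is that for pairs with $d(x,y)\ge\delta_0\coloneq 4t/\eps'$ one has $\abs{\Phi g-\Phi g_1}\le 2\norm\psi_\infty/d(x,y)\le\eps'$, while $\Phi g=\Phi g_1$ whenever $\{x,y\}\cap R=\emptyset$. Hence, writing $E=\set{(x,y)\in\widetilde M\colon \{x,y\}\cap R\neq\emptyset,\ d(x,y)<\delta_0}$, the difference $\overline{\Phi g}-\overline{\Phi g_1}$ is bounded by $\eps'$ off $\overline E$ and by $2$ on $\overline E$, so
\[
\phi(g)\ge\phi(g_1)-\eps'-2\,\abs\mu(\overline E)\ge 1-\eta-\eps'-2\,\abs\mu(\overline E),
\]
and $g\in S(\phi,\alpha)$ as soon as $\eta+\eps'+2\,\abs\mu(\overline E)<\alpha$.

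Everything thus reduces to the main obstacle: choosing the local pair so that $\abs\mu(\overline E)$ is small, and this is precisely where the vanishing scale $t$ (hence $\delta_0\to0$) is used. As $t\to0$ the set $\overline E$ shrinks toward the part of the boundary $\widetilde M^{*}\setminus\widetilde M$ corresponding to vanishing distance over the accumulation point(s) of the local pairs; provided $\mu$ carries no mass on that limiting diagonal set, $\abs\mu(\overline E)\to0$ and the construction closes. I expect the delicate point to be exactly the possibility that $\mu$ has atoms on this diagonal boundary where $\overline{\Phi f}=1$: such an atom is a limit along a whole net of small-distance pairs, so a single-scale bend confined to $R$ need not disturb $\overline{\Phi g}$ there, and one must argue either that these atoms carry negligible mass for a suitable choice of pair (a finiteness/pigeonhole argument over pairs at disjoint small scales) or that on the charged boundary points $\overline{\Phi g_1}$ is already near $1$ because $g_1$ is deep in the slice, so the bend may be arranged to preserve those values. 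This measure-theoretic localization is the heart of the argument; once a pair with small $\abs\mu(\overline E)$ is secured, the $g$ above lies in the arbitrary slice $S(\phi,\alpha)$ at distance $>2-\eps$ from $f$, so $f$ is a Daugavet point. An equivalent and cleaner packaging is to prove, for every $\eps>0$, that $B_{\Lip_0(M)}=\overline{\mathrm{conv}}\set{g\in B_{\Lip_0(M)}\colon\norm{f-g}\ge 2-\eps}$ and invoke Hahn--Banach separation; the construction is identical, but this form makes transparent that only the localization estimate on $\abs\mu(\overline E)$ is doing the work.
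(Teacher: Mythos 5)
Your overall architecture is the paper's: de Leeuw transform, represent the slice functional by a measure over $\widetilde M$, bend a function deep in the slice near a small local pair $(u,v)$ so that its slope there becomes close to $-1$, and control the damage to $\phi(g)$ by the $\mu$-mass of the pairs touching the perturbation region. (The paper packages this as Proposition~\ref{prop: d(u_n,v_n)->0}: $\norm{F+m_{u_n,v_n}}\to 2$ whenever $d(u_n,v_n)\to 0$, from which the theorem follows at once.) However, your proposal has a genuine gap exactly at the point you flag yourself: you never prove that a local pair can be chosen with $\abs{\mu}(\overline E)$ small; you only conjecture that a pigeonhole ``over pairs at disjoint small scales'' or an argument about boundary atoms should work. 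That localization is the entire content of the proof, and your formulation makes it harder than necessary in two concrete ways. First, by compactifying and using regular Borel measures you must estimate the mass of the \emph{closure} $\overline E$; when the local pairs accumulate at a point, the closures of your sets $E$ at every scale can contain a common charged boundary point over the diagonal, so letting $t\to 0$ need not make $\abs{\mu}(\overline E)$ small. The paper avoids this entirely by representing $\phi$ by a finitely additive $\mu\in ba(\widetilde M)$, for which every subset of $\widetilde M$ is measurable and no closures are ever taken. Second, your perturbation regions $R=B(u,\kappa t)\cup B(v,\kappa t)$ at different scales are \emph{not} pairwise disjoint in the delicate (and typical) case where the local pairs accumulate at a single point: they are nested, so no pigeonhole over them applies.

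The paper's resolution is a three-way case analysis on the sequence of local pairs: no convergent subsequence (then the balls $B(u_n,r)$ are eventually pairwise disjoint and finite additivity gives one with $\mu(\Gamma_{1,A}),\mu(\Gamma_{2,A})<\delta$); $u_n\equiv u$; or $u_n\to u$ with $u_n,v_n\neq u$. In the last two cases the perturbation region must be an \emph{annulus} $B(u,r)\setminus B(u,s)$ with $s<\theta\min\{d(u,u_n),d(u,v_n)\}$, since annuli at nested scales are pairwise disjoint and the pigeonhole then works. A secondary soft spot: keeping $\norm{g}\le 1$ after prescribing the reversed slope while matching $g_1$ off $R$ is not automatic from ``$t$ is small,'' because points of $R$ can be arbitrarily close to points outside $R$; the paper handles this by placing the pair deep inside the region ($d(u_n,v_n)<\theta r$), invoking Lemma~\ref{lem: if y in B(u,theta r) and x in M setminus B(u,r)} to get $d(u_n,v_n)\le\tfrac{\theta}{1-\theta}\,d(x,v_n)$ for $x$ outside, and absorbing the resulting error into the slack of a function of norm at most $1-\delta$. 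So the plan is the right one, but as written the argument does not close.
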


In general, one faces difficulties when dealing with the dual space $\Lip_0(M)^*$ due to the lack of a useful characterisation of the space. Our results will heavily rely on the following observation. 
Set
\begin{equation}\label{eq: Gamma:=(M times M) setminus ((x,x): x in M)}
\widetilde{M}=(M\times M)\setminus \{(x,x)\colon x\in M\}.
\end{equation}
Then the mapping, also called the \emph{de Leeuw's transform} (see, e.g., \cite{MR3792558}), 
\[
\Lip_0(M)\ni f\longmapsto \widetilde{f}\in\ell_{\infty}(\widetilde{M}),
\quad\text{where}\quad
\widetilde{f}(x,y)=\frac{f(x)-f(y)}{d(x,y)},
\]
is linear and isometric. Recall that the dual space of $\ell_{\infty}(\widetilde{M})$ is the Banach space $ba(\widetilde{M})$ consisting of all bounded and finitely additive signed measures on $\widetilde{M}$ with the total variation as the norm, that is $\|\mu\|=|\mu|(\widetilde M)$.
Thus, whenever $F\in\Lip_0(M)^\ast$, by the Hahn--Banach extension theorem,
there is a $\mu\in ba(\widetilde{M})$ such that $|\mu|(\widetilde{M})=\|F\|$ and
\begin{equation}\label{eq: F(f)=int_Gamma f dmu for every f in Lip_0(M)}
F(f)=\int_{\widetilde{M}}\widetilde{f}\,d\mu\quad\text{for every $f\in\Lip_0(M)$.}
\end{equation}

We are going to use the following notation. For a subset $A$ of $M$, we write
\[
\Gamma_{1,A}=\{(x,y)\in\widetilde{M}\colon x\in A\}
\quad\text{and}\quad
\Gamma_{2,A}=\{(x,y)\in\widetilde{M}\colon y\in A\}.
\]
%Let us fix some more notation. 
Given a metric space $M$, a point $x$ in $M$, and $r\geq 0$, 
we denote by $B(x,r)$ the open ball in $M$ centred at $x$ of radius $r$. For $x,y\in M$ with $x\neq y$, 
we denote by $m_{x,y}$ the norm-one element $\tfrac{\delta_x-\delta_y}{d(x,y)}$ in $\mathcal{F}(M)$.

%%%%%%%%%%%%%%%%%%%%%%%%%%%%%%%%%%%%%%%%%%%%%%%%%%%%%%%%%%%%%%%%%%%%%%%%%%%%%%%%%%%%%%%%%%%%%%%%%%%%%%%%%%%%%%%%%%
\section{The \texorpdfstring{SSD$2$P}{SSD2P} for spaces of Lipschitz functions}
%\section{Sufficient conditions for the space \texorpdfstring{$\Lip_0(M)$}{Lip(M)} to have the SSD$2$P}
%%%%%%%%%%%%%%%%%%%%%%%%%%%%%%%%%%%%%%%%%%%%%%%%%%%%%%%%%%%%%%%%%%%%%%%%%%%%%%%%%%%%%%%%%%%%%%%%%%%%%%%%%%%%%%%%%%

We start this section by giving two sufficient conditions for the space $\Lip_0(M)$ to have the SSD$2$P. The first one is a consequence of identifying the dual space of $\Lip_0(M)$ via the de Leeuw's transform. From this, we derive the second one, which involves only conditions on the metric of the space $M$ and which we will then use to prove Theorems~\ref{thm: If M is unbounded or not uniformly discrete, then Lip_0(M) has the SSD2P}
and \ref{thm: ruumil Lip_0(K_n) on alati SSD2P}.

%In this section, we give two sufficient conditions for the space $\Lip_0(M)$ to have the SSD$2$P. 
%One of these conditions involves only the underlying metric space $M$. 
%We don't know whether these conditions are also necessary ones. 
% Although both of these conditions are applicable for our purposes, 
%we will make use of the one that involves only the underlying metric space $M$. 
% Namely, we prove that $\Lip_0(M)$ has the SSD$2$P if $M$ satisfies a property we call the \emph{seq-SLTP}. 
%We show that the metric space $M$ has the seq-SLTP whenever $M$ is unbounded, not uniformly discrete, 
%or $M=K_n$, where $n\in\mathbb N$, including the case of $n=2$. 
%We also show that the seq-SLTP is strictly stronger than the SLTP. 
%This leaves open the question of whether the $w^*$-SSD$2$P and the SSD$2$P are equivalent for the spaces of Lipschitz functions. 
%Our first sufficient condition is the following one.

\begin{lemma}\label{lem: main lemma for detecting the SSD2P}
Let $M$ be a pointed metric space
and let $\widetilde{M}$ be as in \eqref{eq: Gamma:=(M times M) setminus ((x,x): x in M)}.
Suppose that, whenever $\delta>0$, $n\in\N$,
$h_1,\dotsc,h_n\in\Lip_0(M)$ with $\|h_i\|\leq 1-\delta$ for every $i\in\{1,\dotsc,n\}$, and
$\mu\in ba(\widetilde{M})$ with only non-negative values,
there exist a subset $A$ of $M$
and functions $f_1,\dotsc,f_n,g\in\Lip_0(M)$ satisfying
\begin{itemize}
\item
$\mu(\Gamma_{1,A})<\delta$ and $\mu(\Gamma_{2,A})<\delta$;
\item
$f_i|_{M\setminus A}=h_i|_{M\setminus A}$ for every $i\in\{1,\dotsc,n\}$;
\item
$g|_{M\setminus A}=0$ and $\|g\|\geq 1-\delta$;
\item
$\|f_i\pm g\|\leq1$ for every $i\in\{1,\dotsc,n\}$.
\end{itemize}
Then the space $\Lip_0(M)$ has the SSD$2$P.
\end{lemma}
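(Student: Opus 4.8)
The plan is to deduce the SSD$2$P directly from the stated hypothesis, feeding into it data manufactured from an arbitrary finite family of slices and using the de Leeuw transform to represent the slice functionals. So fix $n\in\N$, slices $S_i=S(F_i,\alpha_i)$ with $F_i\in S_{\Lip_0(M)^*}$ and $\alpha_i>0$ for $i\in\{1,\dotsc,n\}$, and $\eps>0$. First I would choose $\delta>0$ small enough that $7\delta<\alpha_i$ for every $i$ and $\delta<\eps$. By the Hahn--Banach representation \eqref{eq: F(f)=int_Gamma f dmu for every f in Lip_0(M)}, pick for each $i$ a measure $\mu_i\in ba(\widetilde M)$ with $|\mu_i|(\widetilde M)=\|F_i\|=1$ and $F_i(f)=\int_{\widetilde M}\widetilde f\,d\mu_i$ for all $f\in\Lip_0(M)$.

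The key idea---and the reason the hypothesis is phrased with a single non-negative measure---is to dominate all of the \emph{signed} measures $\mu_i$ at once: set $\mu=\sum_{i=1}^n|\mu_i|$, which is non-negative, lies in $ba(\widetilde M)$, and satisfies $|\mu_i|\le\mu$ for every $i$. Since $\|F_i\|=1$ forces $\sup_{\|h\|\le1-\delta}F_i(h)=1-\delta$, and $1-\alpha_i+6\delta<1-\delta$ by the choice of $\delta$, I can choose $h_i\in\Lip_0(M)$ with $\|h_i\|\le1-\delta$ and $F_i(h_i)>1-\alpha_i+6\delta$. Now apply the hypothesis to $\delta$, $n$, $h_1,\dotsc,h_n$, and $\mu$ to obtain a set $A\subseteq M$ and functions $f_1,\dotsc,f_n,g\in\Lip_0(M)$ with the four listed properties.

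It remains to check that $g$ together with $f_1,\dotsc,f_n$ witnesses the SSD$2$P for the chosen slices. From $\|f_i\pm g\|\le1$ one gets $\|f_i\|\le1$ and $\|g\|\le1$, while $\|g\|\ge1-\delta>1-\eps$ handles the norm requirement on $g$. The heart of the verification is controlling how much $F_i$ changes under the modifications. Since $f_i-h_i$ and $g$ vanish on $M\setminus A$, their de Leeuw transforms vanish off $\Gamma_{1,A}\cup\Gamma_{2,A}$; hence, using $|\mu_i|\le\mu$ together with $\|f_i-h_i\|\le2$ and $\|g\|\le1$, one obtains
\[
|F_i(f_i)-F_i(h_i)|\le 2\bigl(\mu(\Gamma_{1,A})+\mu(\Gamma_{2,A})\bigr)<4\delta
\quad\text{and}\quad
|F_i(g)|<2\delta.
\]
Combining these, $F_i(f_i\pm g)>F_i(h_i)-6\delta>(1-\alpha_i+6\delta)-6\delta=1-\alpha_i$, so $f_i\pm g\in S_i$ (and in particular $f_i\in S_i$), which is exactly the SSD$2$P.

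I expect the main obstacle to be conceptual rather than computational: recognizing that the single non-negative measure in the hypothesis should be taken as $\mu=\sum_i|\mu_i|$, so that smallness of $\mu$ on $\Gamma_{1,A}$ and $\Gamma_{2,A}$ simultaneously controls every $F_i$; and then balancing the two competing demands on the auxiliary functions $h_i$---that their norm stay below $1-\delta$ (as required in order to invoke the hypothesis) while their slice values $F_i(h_i)$ stay high enough to survive the $O(\delta)$ perturbations and keep $f_i\pm g$ inside the slices, which is precisely what forces the choice $7\delta<\alpha_i$. The remaining estimates are routine once the observation that modifications on $A$ affect the transforms only on $\Gamma_{1,A}\cup\Gamma_{2,A}$ is in place.
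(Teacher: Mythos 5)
Your proof is correct and follows essentially the same route as the paper's: represent each $F_i$ by a measure $\mu_i\in ba(\widetilde M)$ via the de Leeuw transform, feed $\mu=\sum_i|\mu_i|$ and near-optimal $h_i$ of norm at most $1-\delta$ into the hypothesis, and observe that the perturbations $f_i-h_i$ and $g$ only affect the integrals on $\Gamma_{1,A}\cup\Gamma_{2,A}$, where $\mu$ is small. The only difference is bookkeeping (you keep the original slice widths $\alpha_i$ and require $7\delta<\alpha_i$, while the paper first normalises all slices to width $\eps$ and takes $8\delta\le\eps$), and your constants check out.
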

\begin{proof}
Let $n\in\N$, let $F_1,\dots,F_n\in S_{\Lip_0(M)^\ast}$, and let $\eps>0$.
It suffices to find $f_i\in S(F_i,\eps)$, $i=1,\dotsc,n$, and $g\in\Lip_0(M)$ with $\|g\|>1-\eps$
such that $f_i\pm g\in S(F_i,\eps)$ for every $i\in\{1,\dotsc,n\}$.

For every $i\in\{1,\dotsc,n\}$, let $\mu_i\in ba(\widetilde{M})$ with $|\mu_i|(\widetilde{M})=1$
satisfy \eqref{eq: F(f)=int_Gamma f dmu for every f in Lip_0(M)}
with $F$ and $\mu$ replaced by $F_i$ and~$\mu_i$, respectively.
Define $\mu = |\mu_1|+\dotsb+|\mu_n|$.
Fix a real number $\delta>0$ satisfying $8\delta\leq\eps$.
For every $i\in\{1,\dotsc,n\}$, pick a function $h_i\in S(F_i, 2\delta)$ with $\|h_i\|\leq 1-\delta$.

Let a subset $A$ of $M$ and functions $f_1,\dotsc,f_n,g\in\Lip_0(M)$ satisfy
the conditions in the lemma.
Setting $\Gamma_A = \Gamma_{1,A}\cup\Gamma_{2,A}$, one has $\mu(\Gamma_A)<2\delta$,
hence, whenever $i\in\{1,\dotsc,n\}$,
\begin{align*}
|F_i(g)|
&=\biggl|\int_{\widetilde{M}}\widetilde{g}\,d\mu_i\biggr|
=\biggl|\int_{\Gamma_A}\widetilde{g}\,d\mu_i\biggr|
\leq|\mu_i|(\Gamma_A)
<2\delta
\end{align*}
and (observing that $\widetilde{f}_i|_{\widetilde{M}\setminus\Gamma_A}=\widetilde{h}_i|_{\widetilde{M}\setminus\Gamma_A}$)
\begin{align*}
F_i(f_i)
&=\int_{\widetilde{M}}\widetilde{f}_i\,d\mu_i
%=\int_{\widetilde{M}\setminus\Gamma_A}\widetilde{f_i}\,d\mu_i+\int_{\Gamma_A}\widetilde{f_i}\,d\mu_i
=\int_{\widetilde{M}}\widetilde{h}_i\,d\mu_i+\int_{\Gamma_A}\bigl(\widetilde{f}_i-\widetilde{h}_i\bigr)\,d\mu_i\\
&=F_i(h_i)+\int_{\Gamma_A}\bigl(\widetilde{f}_i-\widetilde{h}_i\bigr)\,d\mu_i\\
&>1-2\delta-2|\mu_i|(\Gamma_A)
>1-6\delta,
\end{align*}
and thus
\begin{align*}
F_i(f_i\pm g)
&\geq F_i(f_i)-|F_i(g)|>1-6\delta-2\delta\geq 1-\varepsilon.
\end{align*}
\end{proof}

One can prove Theorems \ref{thm: If M is unbounded or not uniformly discrete, then Lip_0(M) has the SSD2P}
and \ref{thm: ruumil Lip_0(K_n) on alati SSD2P} by directly applying Lemma \ref{lem: main lemma for detecting the SSD2P}.
However, we prefer to first prove (and then use) a further sufficient condition for $\Lip_0(M)$ to have the SSD$2$P,
which involves only conditions on the metric of the space $M$ and is therefore easy to handle.

\begin{defn}[cf. {\cite[Definition 1.3]{MR4026495}}]\label{def: seq-SLTP}
We say that a metric space~$M$ has the \emph{sequential strong long trapezoid property} (briefly, \emph {seq-SLTP}) if, 
for every $\varepsilon>0$, there exist pairwise disjoint subsets $A_1, A_2,\dotsc$ of $M$ such that,
for every $m\in\N$, there are $u_m, v_m\in A_m$ with $u_m\neq v_m$ satisfying, for all $x,y \in M\setminus A_m$,
\begin{equation}\label{eq: LTP}
(1-\varepsilon)\bigl(d(x,y)+d(u_m,v_m)\bigr)\leq d(x,u_m)+d(y,v_m),
\end{equation}
and, for all $x,y,z,w \in M\setminus A_m$,
\begin{equation}\label{eq: SLTP}
\begin{aligned}
(1-\varepsilon)&\bigl(d(x,y)+d(z,w)+2d(u_m,v_m)\bigr)\\
&\qquad\qquad\leq d(x,u_m)+d(y,u_m)+d(z,v_m)+d(w,v_m).
\end{aligned}
\end{equation}
\end{defn}

\begin{thm}[cf. {\cite[the proof of Theorem 2.1, (ii)$\Rightarrow$(i)]{MR4026495}}]\label{thm: SSD2P from seq-SLTP}
Let $M$ be a pointed metric space. If $M$ has the seq-SLTP, then $\Lip_0(M)$ has the SSD$2$P.
\end{thm}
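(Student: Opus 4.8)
The plan is to verify the hypotheses of Lemma~\ref{lem: main lemma for detecting the SSD2P}. So I fix $\delta>0$, $n\in\N$, functions $h_1,\dots,h_n\in\Lip_0(M)$ with $\|h_i\|\le1-\delta$, and a non-negative $\mu\in ba(\widetilde M)$, and must produce a set $A\subseteq M$ together with $f_1,\dots,f_n,g$ satisfying the four bullets. First I would invoke the seq-SLTP with a parameter $\varepsilon=\varepsilon(\delta)$ chosen small (how small is dictated by the estimates in the final step) to obtain pairwise disjoint sets $A_1,A_2,\dots$ and points $u_m,v_m\in A_m$ satisfying \eqref{eq: LTP} and \eqref{eq: SLTP}.

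The set $A$ will be one of the $A_m$, selected so that $\mu$ carries little mass on the ``boundary traces'' $\Gamma_{1,A}$ and $\Gamma_{2,A}$; this is the only place the \emph{disjointness} of the $A_m$ enters. Since the $A_m$ are pairwise disjoint, the sets $\Gamma_{1,A_m}$ are pairwise disjoint in $\widetilde M$, and so are the $\Gamma_{2,A_m}$. As $\mu$ is non-negative, bounded, and finitely additive, for every finite index set $F$ one has $\sum_{m\in F}\mu(\Gamma_{1,A_m})=\mu\bigl(\bigcup_{m\in F}\Gamma_{1,A_m}\bigr)\le\mu(\widetilde M)<\infty$, and likewise for $\Gamma_{2,A_m}$; hence $\mu(\Gamma_{1,A_m})\to0$ and $\mu(\Gamma_{2,A_m})\to0$. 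Fixing an index $m$ large enough that both are $<\delta$ and setting $A=A_m$, $u=u_m$, $v=v_m$ secures the first bullet.

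It remains to build $g$ and the $f_i$. I would define $g$ as an explicit peak function manufactured from the distances to $u$, $v$, and $M\setminus A$, arranged so that $g$ vanishes off $A$, has $\|g\|\le1$, and realises slope at least $1-\delta$ (whence $\|g\|\ge1-\delta$); the inequality \eqref{eq: LTP}, applied to near-minimisers of $d(\cdot,u)$ and $d(\cdot,v)$ over $M\setminus A$, is exactly what guarantees $\operatorname{dist}(u,M\setminus A)+\operatorname{dist}(v,M\setminus A)\ge(1-\varepsilon)\,d(u,v)$ and hence that such a peak fits inside $A$. The steep behaviour of $g$ is localised near $u$ and $v$, and each $f_i$ I would take equal to $h_i$ on $M\setminus A$ and suitably flattened on $A$ near that localised peak, so that both $f_i+g$ and $f_i-g$ stay Lipschitz-$1$. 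All functions are then produced on the whole of $M$ by a McShane-type extension, which is where one checks that they are genuinely Lipschitz and match the prescribed data.

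The main obstacle is the verification of the last bullet, $\|f_i\pm g\|\le1$. Bounding the difference quotient $\tfrac{|(f_i\pm g)(s)-(f_i\pm g)(t)|}{d(s,t)}$ splits into three cases: both $s,t\in M\setminus A$ (where $f_i\pm g=h_i$, immediate from $\|h_i\|\le1-\delta$), both $s,t\in A$ (controlled directly by the definitions of $f_i$ and $g$), and the mixed case $s\in A$, $t\in M\setminus A$. The mixed case is the crux, and it is precisely here that the trapezoid inequalities get consumed: \eqref{eq: LTP} delivers the one-sided estimate for a single choice of sign, while the four-point inequality \eqref{eq: SLTP} --- with its two $u$-entries and two $v$-entries --- produces the estimates for $+g$ and $-g$ \emph{simultaneously}, coupling the behaviour near $u$ with that near $v$, which is exactly the symmetry the SSD$2$P requires. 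Matching the combinatorics of these two inequalities to the case analysis, and calibrating $\varepsilon$ against $\delta$ so that every resulting bound closes up, is the technical heart of the argument and the step I expect to be delicate.
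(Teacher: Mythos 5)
Your plan coincides with the paper's own proof: reduce to Lemma~\ref{lem: main lemma for detecting the SSD2P}, select $A=A_m$ using disjointness and finite additivity of $\mu$, build a two-sided peak $g$ supported in balls around $u$ and $v$ inside $A$, set $f_i=h_i$ off $A$ and constant on the peak's support, extend by a McShane formula, and consume \eqref{eq: LTP} and \eqref{eq: SLTP} in the mixed-case estimates exactly as you describe. The one point your sketch glosses over is that the peak radii cannot be taken as $\operatorname{dist}(u,M\setminus A)$ and $\operatorname{dist}(v,M\setminus A)$ (bounded below via \eqref{eq: LTP}) but must be the smaller quantities $r_0=\tfrac{1}{2}\inf_{x,y\in M\setminus A}\bigl(d(x,u)+d(y,u)-(1-\delta)d(x,y)\bigr)$ and its $v$-analogue, which account for the slope of the worst $(1-\delta)$-Lipschitz $h_i$ outside $A$ and whose sum is bounded below by $(1-\delta)d(u,v)$ precisely via \eqref{eq: SLTP} --- this is what lets the constants $c_i$ exist and the estimates for $f_i\pm g$ close up.
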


\begin{rem}
We do not know whether the converse of Theorem~\ref{thm: SSD2P from seq-SLTP} holds. 
However, the seq-SLTP is strictly stronger than the SLTP (see Example~\ref{ex: SLTP but not seq-SLTP} below).
\end{rem}

\begin{proof}[Proof of Theorem~\ref{thm: SSD2P from seq-SLTP}]
Assume that $M$ has the seq-SLTP. Let $\delta>0$, $n\in \mathbb{N}$, $h_1,\dotsc, h_n\in \Lip_0(M)$ with $\|h_i\|\leq 1-\delta$ for every $i\in \{1,\ldots, n\}$, and let $\mu\in ba(\widetilde{M})$ with only non-negative values where $\widetilde{M}$ is as in $\eqref{eq: Gamma:=(M times M) setminus ((x,x): x in M)}$. By Lemma \ref{lem: main lemma for detecting the SSD2P},
it suffices to find a subset $A$ of $M$ and functions $f_1,\dotsc,f_n,g\in\Lip_0(M)$
satisfying the conditions of that lemma.

By the seq-SLTP, there exist subsets $A_1, A_2, \dots$ of $M$ and points $u_m, v_m\in A_m$, $m=1,2\dots$, as in Definition~\ref{def: seq-SLTP} with $\varepsilon=\delta$. Since the sets $A_1, A_2,\dotsc$ are pairwise disjoint, there exists an $m\in \mathbb{N}$ such that $\mu(\Gamma_{1,A_m})<\delta$ and $\mu(\Gamma_{2,A_m})<\delta$. Let $A = A_m$,  $u = u_m$, and $v=v_m$. In order to define the suitable functions $f_1,\dotsc,f_n,g$, we follow the idea of \cite[proof of Theorem~2.1, (ii)$\Rightarrow$(i)]{MR4026495}.

Setting
\[
r_0=\frac{1}{2}\inf_{x,y\in M\setminus A}\bigl(d(x,u)+d(y,u)-(1-\delta)d(x,y)\bigr)
\]
and
\[
s_0=\frac{1}{2}\inf_{z,w\in M\setminus A}\bigl(d(z,v)+d(w,v)-(1-\delta)d(z,w)\bigr),
\]
one has $r_0+s_0\geq(1-\delta)d(u,v)$. Thus, there exist $r,s\geq 0$ with $r \leq  r_0$ and $s\leq s_0$ such that
\[
r+s=(1-\delta) d(u,v).
\]
We may assume that $r>0$. Define a function $g\colon M\to\mathbb{R}$ by
\[
g(x)=
\begin{cases}
r-d(x,u)&\quad\text{if $x\in B(u,r)$;}\\
-s+d(x,v)&\quad\text{if $x\in B(v,s)$;}\\
0&\quad\text{otherwise.}
\end{cases}
\]
Observe that $\|g\|\leq 1$
(here we use that, whenever $x\in B(u,r)$ and $y\in B(v,s)$, one has $g(y)\leq 0 \leq g(x)$, and thus $|g(x)-g(y)|=g(x)-g(y)$).
One also has $\|g\|\geq 1-\delta$, because
\[
|g(u)-g(v)|=g(u)-g(v)=r+s=(1-\delta) d(u,v).
\]

Set $L=(M\setminus A) \cup B$, where $B=B(u,r)\cup B(v,s)$. Observe that $B\cap (M\setminus A) =\emptyset$. 
Fix $i\in\{1,\dotsc,n\}$. We first define $f_i$ on the set $L$. Let $f_i|_{M\setminus A} = h_i|_{M\setminus A}$. We next show that there is a $c_i\in\mathbb{R}$ such that, 
by
%there is a $c_i\in\mathbb{R}$ such that,
defining $f_i|_{B}=c_i$, one has
%$\|f_i\|_{\Lip_0(L)}\leq 1$, 
$\|f_i\pm g\|_{\Lip_0(L)}\leq 1$ and $\|f_i\pm |g|\|_{\Lip_0(L)}\leq 1$.

Set
\begin{alignat*}{2}
\widecheck{a}_i&=\sup_{x\in M\setminus A}\bigl( h_i(x)-d(x,u)\bigr),\quad
&\widehat{a}_i&=\inf_{x\in M\setminus A}\bigl( h_i(x)+d(x,u)\bigr),\\
\widecheck{b}_i&=\sup_{x\in M\setminus A}\bigl( h_i(x)-d(x,v)\bigr),\quad
&\widehat{b}_i&=\inf_{x\in M\setminus A}\bigl( h_i(x)+d(x,v)\bigr).
\end{alignat*}
Whenever $x,y\in M\setminus A$, since $\| h_i\|\leq 1-\delta$, one has
\[
 h_i(x)+d(x,u)-\bigl( h_i(y)-d(y,u)\bigr)\geq d(x,u)+d(y,u)-(1-\delta) d(x,y)\geq 2r,
\]
and, by \eqref{eq: LTP},
\begin{align*}
 h_i(x)+d(x,u)-\bigl( h_i(y)-d(y,v)\bigr)
&\geq d(x,u)+d(y,v)-(1-\delta)d(x,y)\\
&\geq (1-\delta)d(u,v)> r+s.
\end{align*}
Thus, $\widehat{a}_i-r\geq \widecheck{a}_i+r$ and $\widehat{a}_i-r\geq \widecheck{b}_i+s$.
Similarly, one observes that $\widehat{b}_i-s\geq \widecheck{b}_i+s$
and $\widehat{b}_i-s \geq \widecheck{a}_i+r$.
It follows that there exists a
$c_i\in\bigl[\widecheck{a}_i+r,\widehat{a}_i-r\bigr]\cap\bigl[\widecheck{b}_i+s,\widehat{b}_i-s\bigr]$.
%It is straightforward to verify that 
This $c_i$ does the job. By setting
\[
f_i(y)=\sup_{x\in L}\bigl(f_i(x)+|g(x)|-d(x,y)\bigr)
\quad\text{for every $y\in M\setminus L$,}
\]
one has
%$\|f_i\|_{\Lip_0(L)}\leq 1$, 
$\|f_i\pm g\|_{\Lip_0(M)}\leq 1$. 
For details, we refer the reader to the corresponding parts of \cite[proof of Theorem~2.1, (ii)$\Rightarrow$(i)]{MR4026495}, 
where the argumentation reads nearly word-for-word when replacing $N$ by $M\setminus A$.
\end{proof}

% \begin{thm}[cf. {\cite[Propositions~2.7 and 2.8]{MR4093788}}]\label{thm: ruumil Lip_0(K_n) on alati SSD2P}
% For every $n\in\N$, the space $\Lip_0(K_n)$ has the SSD$2$P.
% \end{thm}

Theorems \ref{thm: If M is unbounded or not uniformly discrete, then Lip_0(M) has the SSD2P}
and \ref{thm: ruumil Lip_0(K_n) on alati SSD2P} are immediate corollaries of
Theorem \ref{thm: SSD2P from seq-SLTP} teamed with the following Propositions
\ref{prop: M unbounded or not uniformly discrete is seq-SLTP} and \ref{prop: MR-l K_n on alati seq-SLTP},
respectively.

\begin{prop}\label{prop: M unbounded or not uniformly discrete is seq-SLTP}
An unbounded or not uniformly discrete metric space has the seq-SLTP.
\end{prop}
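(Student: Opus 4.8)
The plan is to verify the definition of seq-SLTP directly by constructing, for a given $\varepsilon>0$, a sequence of pairwise disjoint sets $A_1,A_2,\dotsc$ together with the required points $u_m,v_m$, treating the unbounded and the not-uniformly-discrete cases separately. In both cases, the intuition is the same as for the classical (S)LTP: we want the pairs $\{u_m,v_m\}$ to sit ``far out'' or ``tightly clustered'' so that, from the vantage point of any $x,y,z,w\in M\setminus A_m$, the points $u_m$ and $v_m$ are essentially indistinguishable, making the triangle inequalities \eqref{eq: LTP} and \eqref{eq: SLTP} nearly tight. The genuinely new ingredient here, compared to merely establishing the SLTP, is the requirement that \emph{infinitely many} such pairs exist \emph{simultaneously} in pairwise disjoint sets for a \emph{single} $\varepsilon$; this is what will let the proof of Theorem~\ref{thm: SSD2P from seq-SLTP} pick an index $m$ avoiding the finitely much mass that $\mu$ can concentrate.

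First I would treat the unbounded case. Fix $\varepsilon>0$. Using unboundedness, I would inductively choose points $w_1,w_2,\dotsc$ whose distances from $0$ grow extremely fast, say $d(0,w_{m+1})$ much larger than $d(0,w_m)$, and then take $u_m,v_m$ to be two points at comparable large distance with $d(u_m,v_m)$ small relative to their distance to the origin; the cleanest device is to let each $A_m$ be a ball $B(w_m,\rho_m)$ of suitable radius around a far-away centre, with $u_m,v_m\in A_m$ chosen close together. The pairwise disjointness of the $A_m$ follows from the rapid growth of the $d(0,w_m)$. To verify \eqref{eq: LTP} and \eqref{eq: SLTP}, the point is that for $x,y,z,w\in M\setminus A_m$ their distances to $u_m$ and to $v_m$ differ from a common large quantity by at most $d(u_m,v_m)$ plus a controlled error, so after dividing out, the factor $(1-\varepsilon)$ absorbs the discrepancy; this is a routine estimate using the triangle inequality and the largeness of $d(0,w_m)$.

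Next I would treat the not-uniformly-discrete case, which is in a sense dual. Here $\inf\{d(x,y)\colon x\neq y\}=0$, so there exist pairs of distinct points arbitrarily close together. Fix $\varepsilon>0$. I would select a sequence of pairs $\{u_m,v_m\}$ with $d(u_m,v_m)\to 0$ so fast that the pairs can be separated into pairwise disjoint sets $A_m$ (for instance, after passing to a subsequence, each $A_m=\{u_m,v_m\}$, using that distinct clustering pairs can be chosen to avoid one another), and such that $d(u_m,v_m)$ is tiny compared to the distance from $\{u_m,v_m\}$ to every point of $M\setminus A_m$ that matters. Now for $x,y\in M\setminus A_m$, since $u_m$ and $v_m$ are within $d(u_m,v_m)$ of each other, we have $d(x,u_m)\approx d(x,v_m)$ and likewise for $y$; feeding this into the triangle inequality $d(x,u_m)+d(y,v_m)\geq d(x,y)$ and adding the negligible term $d(u_m,v_m)$ gives \eqref{eq: LTP}, and the four-point analogue gives \eqref{eq: SLTP}, the $(1-\varepsilon)$ again absorbing the small errors.

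The main obstacle I anticipate is not any single estimate but bookkeeping: ensuring that the \emph{same} $\varepsilon$ works for \emph{all} $m$ simultaneously while keeping the $A_m$ pairwise disjoint. In the unbounded case one must be careful that enlarging the sets $A_m$ to contain both $u_m$ and $v_m$ does not destroy disjointness or the crucial inequality $d(u_m,v_m)\ll d(0,w_m)$; in the not-uniformly-discrete case one must guarantee that infinitely many clustering pairs can be extracted and mutually separated, which may require passing to a subsequence and possibly distinguishing whether the clustering happens near a single accumulation point or is spread out. I would handle this by choosing all the relevant radii and gap parameters in advance as a rapidly decreasing (resp. increasing) sequence depending only on $\varepsilon$, so that a uniform verification of \eqref{eq: LTP} and \eqref{eq: SLTP} goes through for every $m$ at once.
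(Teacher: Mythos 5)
There is a genuine gap, and it is essentially the same one in both halves of your argument: you only ever use the regime in which every point of $M\setminus A_m$ is far from $u_m$ and $v_m$ \emph{relative to} $d(u_m,v_m)$, and that regime is not always attainable. In the unbounded case you propose to pick $u_m,v_m$ far from the origin with $d(u_m,v_m)$ small relative to their distance to $0$, so that from every outside vantage point (including points near $0$) the pair is ``indistinguishable''. But an unbounded metric space can be uniformly discrete with rapidly spreading points: in $M=\{0\}\cup\{2^n\colon n\in\N\}\subset\R$ any two distinct points $u,v$ satisfy $d(u,v)\geq\frac{1}{2}\max\{d(0,u),d(0,v)\}$, so no far-out pair is ever close together relative to its distance to the origin, and your construction cannot even be started. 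The analogous failure occurs in the not-uniformly-discrete case when the close pairs accumulate at a single point $p$: other clustering pairs lie in $M\setminus A_m$ at distance from $u_m$ or $v_m$ comparable to (or smaller than) $d(u_m,v_m)$, so taking $A_m=\{u_m,v_m\}$ and treating $d(u_m,v_m)$ as negligible against $d(x,u_m)$ does not work; you flag this sub-case yourself but do not resolve it.

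The missing idea is a second estimate for quadruples of points that are \emph{close to one another} relative to $d(u_m,v_m)$, combined with taking the sets $A_m$ to be annuli rather than balls. This is exactly the content of the paper's Lemma~\ref{lem: LTP ja SLTP ketta jaoks}: take $A=B(p,r)\setminus B(p,s)$ with $u,v\in A$, require $4s\leq\varepsilon d(u,v)$ (so all points of the inner ball $B(p,s)$ have mutual distances negligible against $d(u,v)$, which yields \eqref{eq: LTP} and \eqref{eq: SLTP} because the ``base'' $d(u,v)$ now dominates the ``top'' $d(x,y)$ --- no lower bound on $d(x,u)$ is needed), and require $2d(u,v)\leq\varepsilon\min\{d(x,u),d(x,v)\}$ only for $x$ outside $B(p,r)$, which is your regime. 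The proposition then follows by taking expanding annuli around $0$ in the unbounded case, shrinking annuli around a limit point when one exists, and pairwise disjoint balls $B(u_m,r)$ of a \emph{fixed} radius $r$ (i.e.\ $s=0$) when $M$ is not uniformly discrete but has no limit points; only this last sub-case matches your sketch as written. Without the inner-ball regime and the annular $A_m$, the inequalities \eqref{eq: LTP} and \eqref{eq: SLTP} genuinely fail for the points you leave uncontrolled.
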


\begin{prop}\label{prop: MR-l K_n on alati seq-SLTP}
For every $n\in \N$, the metric space $K_n$ has the seq-SLTP.
\end{prop}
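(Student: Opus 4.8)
The plan is to verify the purely metric property seq-SLTP directly, building the pairwise disjoint sets $A_m$ explicitly inside $K_n\subseteq\ell_\infty$ by placing the pairs $(u_m,v_m)$ on disjoint blocks of coordinates, and then reducing the two trapezoid inequalities \eqref{eq: LTP} and \eqref{eq: SLTP} to lower estimates on the distances $d(p,u_m)$ and $d(p,v_m)$ for points $p$ lying outside $A_m$. The elementary observation driving the reduction is that $K_n$ has diameter $n$, so the right-hand sides of \eqref{eq: LTP} and \eqref{eq: SLTP} are at most $(1-\eps)\cdot 2n$ and $(1-\eps)\cdot 4n$, respectively. Consequently, a clean \emph{sufficient} condition is this: if $d(u_m,v_m)\le n$ and every $p\in K_n\setminus A_m$ satisfies $d(p,u_m)\ge(1-\eps)n$ and $d(p,v_m)\ge(1-\eps)n$, then both inequalities hold at once, since each summand on the left is then at least $(1-\eps)n$. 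Thus the whole task reduces to manufacturing, for each $\eps>0$, infinitely many pairwise disjoint ``neighbourhoods'' $A_m$ whose complements remain uniformly far from the chosen pair.

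To realise this I would fix a partition $\N=\bigsqcup_{m}S_m$ of the coordinate index set into infinite blocks and take $u_m,v_m$ to be spike sequences supported on $S_m$, for instance $v_m=n\,\mathbf{1}_{S_m}$ together with a one-coordinate modification $u_m$ of $v_m$, so that $u_m\neq v_m$ while $d(u_m,v_m)$ stays under control; the set $A_m$ is then a neighbourhood of $\{u_m,v_m\}$ of a radius $R=R(\eps,n)$ to be tuned. The point of using $\ell_\infty$ spikes on disjoint supports is that \emph{being close to $u_m$ forces the coordinates indexed by $S_m$ to be large}, whereas being close to $u_k$ (for $k\neq m$) forces those same coordinates to be small; when $n$ is large these two demands are incompatible, and this is exactly what makes the $A_m$ pairwise disjoint. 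First I would run this computation in the generic regime, choosing $R$ below the disjointness threshold ($\approx n/2$) and checking, via the distance bound $d(p,\cdot)\ge R+1$ for $p\notin A_m$ together with a small value of $d(u_m,v_m)$, that \eqref{eq: LTP} and \eqref{eq: SLTP} survive once $n$ is large relative to $1/\eps$. The small-$n$ ranges — and, for each fixed $n$, the range $\eps<2/n$ — would then be treated by refining the construction: splitting each block further, or assigning the ``ambiguous'' sequences (those taking intermediate values on several blocks) to a unique $A_m$ by a first-index rule, so that the needed distance estimates hold for every $\eps>0$. The case $n=1$ is trivial because all distances in $K_1$ equal $1=\operatorname{diam}K_1$, so the legs are automatically maximal.

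The main obstacle is the genuine tension between the two requirements. To make \eqref{eq: SLTP} hold for arbitrarily small $\eps$, the complement of $A_m$ must stay at distance essentially $n$ from $u_m$ and $v_m$, which pushes the radius $R$ up toward $n$; but pairwise disjointness in a space of diameter $n$ forces $R$ strictly below $n/2$, so balls of the ideal radius $(1-\eps)n$ can never be made disjoint. The remedy is that the clean sufficient condition above must be \emph{relaxed}: one has to allow outside points that are only moderately far from $u_m,v_m$ (such as the constant sequence $\mathbf{1}$, sitting at distance $n-1$, or spikes perturbed by a small bump on an earlier block) and verify the four-point inequality \eqref{eq: SLTP} directly, exploiting cancellation in the $\ell_\infty$ combinatorics rather than crude per-leg bounds. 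This reconciliation becomes critical exactly at $n=2$, where the two coordinate values that ought to separate neighbouring blocks, namely $1$ and $n-1$, collapse to a single value, so that the everywhere-moderate sequences refuse to be separated by the spike mechanism. I expect this borderline case to be where the real work lies, which is consistent with $n=2$ having been the instance left open by the earlier authors; the refinement I would pursue is to tune $R$, the number of coordinates per block, and the assignment rule against the precise $\eps$-slack in \eqref{eq: SLTP}, following the template of \cite[proof of Theorem~2.1]{MR4026495}.
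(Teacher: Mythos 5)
Your framework (spike sequences supported on disjoint coordinate blocks) points in the same general direction as the paper's construction, but there is a genuine gap: the central verification of \eqref{eq: LTP} and \eqref{eq: SLTP} is never carried out, and the one precise sufficient condition you do formulate is unrealizable, as you yourself observe. Requiring every $p\in K_n\setminus A_m$ to satisfy $d(p,u_m)\geq(1-\eps)n$ and $d(p,v_m)\geq(1-\eps)n$ forces $A_m$ to contain a ball of radius roughly $(1-\eps)n$ in a space of diameter $n$, so for small $\eps$ no two such sets can be disjoint (the constant sequence $\mathbf{1}$ already sits within distance $n-1$ of every spike $n\,\mathbf{1}_{S_m}$). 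The promised repair --- ``verify the four-point inequality directly, exploiting cancellation in the $\ell_\infty$ combinatorics'' --- is exactly the content of the proposition and is left unspecified; gesturing at tuning $R$, block sizes, and an assignment rule, and conceding that ``the real work'' lies in the borderline cases, does not close the argument.

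The missing idea is that one should not aim for uniform per-point lower bounds at all. Take $d(u_m,v_m)$ \emph{minimal}, namely equal to $1$: the paper uses the two-coordinate block $\{2m-1,2m\}$ with $u_m=ne_{2m-1}+(n-1)e_{2m}$ and $v_m=(n-1)e_{2m-1}+ne_{2m}$, and defines $A_m$ not as a metric ball but as the set of sequences whose maximum value $n$ is attained only inside that block (disjointness is then automatic). The relevant estimate is per \emph{pair}, not per point: if $d(x,y)=|x_j-y_j|$ is attained at a coordinate $j$ outside the block, then the larger of $x_j,y_j$ is at least $d(x,y)$, so that point is at distance at least $d(x,y)$ from both $u_m$ and $v_m$ (whose coordinates vanish off the block); if $j$ lies inside the block, then membership in $K_n\setminus A_m$ caps the larger coordinate at $n-1$ (or else the point is at distance $n$ from $u_m$ and $v_m$ anyway), and the point with the smaller coordinate is at distance at least $d(x,y)$ from both $u_m$ and $v_m$. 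Since all distances in $K_n$ are positive integers, the remaining leg is at least $1=d(u_m,v_m)$, and \eqref{eq: LTP} and \eqref{eq: SLTP} follow even with $\eps=0$, uniformly in $n$ --- in particular there is no special difficulty at $n=2$ once the construction is set up this way.
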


In the proof of Proposition \ref{prop: M unbounded or not uniformly discrete is seq-SLTP},
we make use of the following lemma.

\begin{lemma}\label{lem: LTP ja SLTP ketta jaoks}
Let $\varepsilon>0$, $p\in M$, $0\leq s< r$, and $u,v\in B(p,r)\setminus B(p,s)$ be such that
\[
4s\leq \varepsilon d(u,v)
\]
and, for every $x\in M\setminus B(p,r)$, one has
\[
2d(u,v)\leq \varepsilon\min\bigl\{d(x,u),d(x, v)\bigr\}.
\]
Then, for all $x,y,z,w \in M\setminus A$, where $A = B(p,r)\setminus B(p,s)$, one has
\begin{equation}\label{eq: LTP originaal}
(1-\varepsilon)\bigl(d(x,y)+d(u,v)\bigr)\leq d(x,u)+d(y,v)
\end{equation}
and
\begin{equation}\label{eq: SLTP originaal}
\begin{aligned}
(1-\varepsilon)&\bigl(d(x,y)+d(z,w)+2d(u,v)\bigr)\\
&\qquad\qquad\leq d(x,u)+d(y,u)+d(z,v)+d(w,v).
\end{aligned}
\end{equation}
\end{lemma}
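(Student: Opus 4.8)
The plan is to verify the two inequalities \eqref{eq: LTP originaal} and \eqref{eq: SLTP originaal} by exploiting the two hypotheses as follows: the ball radius condition $2d(u,v)\leq\varepsilon\min\{d(x,u),d(x,v)\}$ will let me bound the ``long'' distances $d(x,u)$, $d(y,v)$, etc., from below by something comparable to $d(x,y)$ plus $d(u,v)$, while the annulus/centering condition $4s\leq\varepsilon d(u,v)$ controls how far $u$ and $v$ can sit from a common reference point. Note that every point $x,y,z,w$ lies in $M\setminus A$, i.e.\ either inside $B(p,s)$ or outside $B(p,r)$, and $u,v$ lie in the annulus $B(p,r)\setminus B(p,s)$; so the geometry naturally splits into whether a given variable is ``inside'' (close to $p$, hence within $s$ of both $u$ and $v$ up to the annulus width) or ``outside'' (far, where the second hypothesis bites).

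First I would prove \eqref{eq: LTP originaal}. The key observation is the triangle inequality $d(x,u)+d(y,v)\geq d(x,y) - d(u,v)$ together with the reverse-type estimate $d(x,u)+d(y,v)\geq d(u,v) - \text{(small)}$; combining these two lower bounds and averaging, or rather taking the better of them depending on the size of $d(x,y)$ relative to $d(u,v)$, should give $d(x,u)+d(y,v)\geq(1-\varepsilon)(d(x,y)+d(u,v))$ once the error terms are absorbed into the $\varepsilon$. Concretely, if $x$ (or $y$) is outside $B(p,r)$, the hypothesis $2d(u,v)\leq\varepsilon d(x,u)$ makes $d(u,v)$ negligible against $d(x,u)$, so $d(x,u)+d(y,v)\geq d(x,u)\geq d(x,y)-d(y,u)\geq\cdots$ handles that regime; if both $x,y$ are inside $B(p,s)$, then $d(x,y)\leq 2s\leq\tfrac{\varepsilon}{2}d(u,v)$ is tiny, so the inequality reduces to showing $d(x,u)+d(y,v)\geq(1-\varepsilon)d(u,v)$ minus a negligible term, which follows from the triangle inequality $d(x,u)+d(y,v)\geq d(u,v)-d(x,y)\geq d(u,v)-2s\geq(1-\tfrac{\varepsilon}{2})d(u,v)$. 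I would organize this as a case distinction on which of the four ``inside/outside'' combinations the pair $(x,y)$ falls into.

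Next, for \eqref{eq: SLTP originaal}, I would apply the already-established \eqref{eq: LTP originaal} twice and also use its symmetric counterpart with the roles of $u$ and $v$ swapped. Writing the right-hand side as $\bigl(d(x,u)+d(z,v)\bigr)+\bigl(d(y,u)+d(w,v)\bigr)$ and applying \eqref{eq: LTP originaal} to the pair $(x,z)$ and to the pair $(y,w)$ should yield
\begin{align*}
d(x,u)+d(z,v)+d(y,u)+d(w,v)
&\geq(1-\varepsilon)\bigl(d(x,z)+d(u,v)\bigr)\\
&\qquad+(1-\varepsilon)\bigl(d(y,w)+d(u,v)\bigr),
\end{align*}
which bounds below by $(1-\varepsilon)\bigl(d(x,z)+d(y,w)+2d(u,v)\bigr)$. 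The remaining task is then to replace $d(x,z)+d(y,w)$ by $d(x,y)+d(z,w)$ up to an $\varepsilon$-error, which again reduces to the ``inside/outside'' dichotomy: when the relevant points are far out the large distances dominate and any reindexing of small cross-terms costs only a factor controlled by the second hypothesis, and when they are inside $B(p,s)$ all mutual distances are at most $2s\leq\tfrac{\varepsilon}{2}d(u,v)$ and hence absorbable.

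The main obstacle I anticipate is the bookkeeping in \eqref{eq: SLTP originaal}: the naive double application of \eqref{eq: LTP originaal} produces $d(x,z)+d(y,w)$ rather than the desired $d(x,y)+d(z,w)$, so I must either choose the pairing of the four variables cleverly or show directly that swapping indices changes the sum of distances by at most an $O(\varepsilon d(u,v))$ term. Getting the constants to close at exactly $(1-\varepsilon)$ rather than $(1-C\varepsilon)$ may require being slightly more careful than the rough estimates above suggest, possibly by tightening the two hypotheses' use (the factors $4$ and $2$ in $4s\leq\varepsilon d(u,v)$ and $2d(u,v)\leq\varepsilon\min\{\cdots\}$ look chosen precisely to make this work), so I would track the error contributions explicitly in each case rather than bounding them crudely.
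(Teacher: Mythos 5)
Your treatment of \eqref{eq: LTP originaal} is essentially the paper's argument: split according to whether both points lie in $B(p,s)$ (where $d(x,y)\leq 2s\leq\tfrac{\varepsilon}{2}\,d(u,v)$ is absorbable into the $\varepsilon$-slack of $d(u,v)$) or at least one lies outside $B(p,r)$ (where $2d(u,v)\leq\varepsilon d(x,u)$ absorbs the $d(u,v)$-error coming from the triangle inequality $d(x,u)+d(y,v)\geq d(x,y)-d(u,v)$). Once the elided ``$\cdots$'' is filled in, that part is fine.

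The SLTP part has a genuine gap. Applying \eqref{eq: LTP originaal} to the pairs $(x,z)$ and $(y,w)$ and then trying to trade $d(x,z)+d(y,w)$ for $d(x,y)+d(z,w)$ at a cost of $O(\varepsilon d(u,v))$ cannot work when two of the four points sit far outside $B(p,r)$. Take $M=\{0,\tfrac12,-\tfrac12,R,-R\}\subset\mathbb{R}$ with $p=0$, $u=\tfrac12$, $v=-\tfrac12$, $r=1$, $s=\varepsilon/8$, and $R\geq \tfrac12+2/\varepsilon$; all hypotheses hold, and with $x=z=R$, $y=w=0$ one gets $d(x,z)+d(y,w)=0$ while $d(x,y)+d(z,w)=2R$. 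The ``swap'' thus costs $2R$, not $O(\varepsilon d(u,v))$, and your double application of \eqref{eq: LTP originaal} only yields a lower bound of about $2d(u,v)$ against a target of order $R$. The paper avoids this by not reusing \eqref{eq: LTP originaal} at all once some point leaves $B(p,r)$: it pairs $x,y$ with $u$ and $z,w$ with $v$ (matching the right-hand side of \eqref{eq: SLTP originaal}) via the plain triangle inequalities $d(x,u)+d(y,u)\geq d(x,y)$ and $d(z,v)+d(w,v)\geq d(z,w)$, and harvests the missing $2d(u,v)$ from the $\varepsilon$-fraction of the one large distance, using $d(x,u)\geq(1-\varepsilon)d(x,u)+2d(u,v)$. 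In the all-inside case your absorption argument does close --- the paper's version proves $d(a,u)+d(b,v)\geq(1-\varepsilon)\bigl(d(u,v)+d(c,d)\bigr)$ for \emph{arbitrary} $a,b,c,d\in B(p,s)$, decoupling the pair on the left from the pair on the right, which is exactly the ``clever pairing'' you were hoping for --- but the outside case requires the different mechanism above, which your proposal does not supply.
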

\begin{proof}
Let $x,y,z,w\in M\setminus A$.
First suppose that $x,y,z,w\in B(p,s)$.
For this case, observe that, whenever $a,b,c,d\in B(p,s)$, one has
\[
\eps d(u,v)\geq4s\geq d(a,b)+d(c,d)
\]
and thus
\begin{align*}
d(a,u)+d(b,v)
&\geq
%d(u,v)-d(a,b)=
(1-\eps)d(u,v)+\eps d(u,v)-d(a,b)\\
&\geq(1-\eps)\bigl(d(u,v)+d(c,d)\bigr).
\end{align*}
Taking, in the above, $a=c=x$ and $b=d=y$, one obtains \eqref{eq: LTP originaal}.
Taking, respectively, $a=c=x$, $b=z$, and $d=y$, and $a=y$, $b=d=w$, and $c=z$, one obtains
\[
d(x,u)+d(z,v)\geq(1-\eps)\bigl(d(u,v)+d(x,y)\bigr)
\]
and
\[
d(y,u)+d(w,v)\geq(1-\eps)\bigl(d(u,v)+d(z,w)\bigr),
\]
and \eqref{eq: SLTP originaal} follows.

Now suppose that (at least) one of the points $x$ and $y$, say $x$, is not in $B(0,r)$.
In this case
\begin{align*}
d(x,u)+d(y,v)
&\geq(1-\eps)d(x,u)+2d(u,v)+d(y,v)\\
&\geq(1-\eps)\bigl(d(x,u)+d(u,v)+d(y,v)+d(u,v)\bigr)\\
&\geq(1-\eps)\bigl(d(x,y)+d(u,v)\bigr).
\end{align*}
Finally, if (at least) one of the points  $x,y,z,w$, say, again, $x$, is not in $B(0,r)$, then
\begin{multline*}
d(x,u)+d(y,u)+d(z,v)+d(w,v)\\
\begin{aligned}
&\geq(1-\eps)\bigl(d(x,u)+d(y,u)+d(z,v)+d(w,v)\bigr)+2d(u,v)\\
&\geq(1-\eps)\bigl(d(x,y)+d(z,w)+2d(u,v)\bigr),
\end{aligned}
\end{multline*}
and the proof is complete.
\end{proof}

\begin{proof}[Proof of Proposition {\ref{prop: M unbounded or not uniformly discrete is seq-SLTP}}]
%We make use of Theorem~\ref{thm: SSD2P by M}.
Let $M$ be a pointed metric space, and let $\eps>0$.

First assume that $M$ is unbounded.
Letting $r_0=1$, we can inductively define points $u_m,v_m\in M\setminus B(0,r_{m-1})$ with $u_m\neq v_m$
and real numbers $r_m>0$, $m=1,2,\dotsc$,
satisfying, for every $m\in\N$, the inequalities $4r_{m-1}\leq\varepsilon d(u_m,v_m)$, $r_m>r_{m-1}$, and
\begin{equation}\label{eq: 2d(u_m,v_m)=< ... for every x in M setminus B(0,r_m)}
2d(u_m,v_m)\leq \varepsilon\min\bigl\{d(x,u_m),d(x, v_m)\bigr\}
\quad\text{for every $x\in M\setminus B(0,r_m)$.}
\end{equation}
Now the sets $A_m\coloneq B(0, r_{m})\setminus B(0, r_{m-1})$, $m=1,2,\dotsc$, are pairwise disjoint.
For every $m\in\N$, Lemma \ref{lem: LTP ja SLTP ketta jaoks}
with $p=0$, $r=r_{m}$, $s=r_{m-1}$, $u=u_m$, and $v=v_m$
implies that, for all $x,y,z,w\in M\setminus A_m$,
the inequalities (\ref{eq: LTP}) and (\ref{eq: SLTP}) hold.

Assume now that $M$ is not uniformly discrete.
We first consider the case when $M$ has a limit point $p$.
Starting with $r_1=1$, we can inductively define points $u_m,v_m\in B(p,r_{m})\setminus\{p\}$ with $u_m\neq v_m$
and real numbers $r_m>0$, $m=1,2,\dotsc$,
satisfying, for every $m\in\N$, the condition \eqref{eq: 2d(u_m,v_m)=< ... for every x in M setminus B(0,r_m)},
$r_{m+1}<r_m$, and $4r_{m+1}\leq\varepsilon d(u_m,v_m)$.
Now the sets $A_m\coloneq B(p, r_{m})\setminus B(p, r_{m+1})$, $m=1,2,\dotsc$, are pairwise disjoint.
For every $m\in\N$, Lemma \ref{lem: LTP ja SLTP ketta jaoks} with  $r=r_{m}$, $s=r_{m+1}$, $u=u_m$, and $v=v_m$
implies that, for all $x,y,z,w\in M\setminus A_m$,
the inequalities (\ref{eq: LTP}) and (\ref{eq: SLTP}) hold.

Finally, consider the case when $M$ has no limit points.
Since $M$ is not uniformly discrete, there exist points $u_m,v_m\in M$ with $u_m\neq v_m$, $m=1,2,\dotsc$,
such that $d(u_m,v_m)\to 0$.
Since $M$ has no limit points, we may assume, after passing to subsequences if necessary,
that there is an $r>0$ such that $d(u_m,u_n)\geq2r$ whenever $m,n\in\N$ with $m\neq n$.
Furthermore, we may assume that $v_m\in B(u_m,r)$ and $4d(u_m,v_m)\leq\eps r$ for every $m\in\N$.
Now the sets $A_m\coloneq B(u_m,r)$, $m=1,2,\dotsc$, are pairwise disjoint.
For every $m\in\N$ and every $x\in M\setminus B(u_m,r)$, one has
\[
\eps d(x,u_m)\geq\eps r>2d(u_m,v_m)
\]
and
\[
\eps d(x,v_m)
\geq\eps\bigl(d(x,u_m)-d(u_m,v_m)\bigr)
>\eps\bigl(r-\tfrac{1}{2}r\bigr)
=\tfrac{1}{2}\eps r
\geq2d(u_m,v_m),
\]
thus
Lemma \ref{lem: LTP ja SLTP ketta jaoks} with $p=u=u_m$, $s=0$, and $v=v_m$
implies that, for all $x,y,z,w\in M\setminus A_m$,
the inequalities (\ref{eq: LTP}) and (\ref{eq: SLTP}) hold.
\end{proof}

\begin{proof}[Proof of Proposition \ref{prop: MR-l K_n on alati seq-SLTP}]
%We make use of Theorem~\ref{thm: SSD2P by M}. 
Let $n\in \N$. For every $m\in \N$, define
\begin{align*}
A_m = \big\{(x_j)_{j=1}^\infty \in K_n\colon \text{$\max_{j} x_{j} = n$ and $x_j<n$ for every $j \not\in\{ 2m-1, 2m\}$} \big\},
\end{align*}
$u_m = ne_{2m-1}+(n-1)e_{2m}$, and $v_m = (n-1)e_{2m-1}+ne_{2m}$. 
Note that the sets $A_1, A_2,\dotsc$ are pairwise disjoint. 
Fix an $m\in \N$. Clearly, $u_m, v_m\in A_m$ and $d(u_m, v_m)=1$. 
We show that, for all $x,y\in K_n\setminus A_m$,
\begin{equation*}\label{eq: LTP_wo_eps}
d(x,y)+d(u_m,v_m)\leq d(x,u_m)+d(y,v_m),
\end{equation*}
and, for all $x,y,z,w\in K_n\setminus A_m$,
\begin{equation*}\label{eq: SLTP_wo_eps}
\begin{aligned}
&d(x,y)+d(z,w)+2d(u_m,v_m)\\
&\qquad\qquad\leq d(x,u_m)+d(y,u_m)+d(z,v_m)+d(w,v_m).
\end{aligned}
\end{equation*}
Fix $x = (x_j)_{j=1}^\infty,y = (y_j)_{j=1}^\infty\in K_n\setminus A_m$.  
It suffices to show that the following inequalities hold:
\begin{align*}
d(x,y)+d(u_m, v_m)&\leq d(x, u_m) + d(y, v_m),\\
d(x,y)+d(u_m, v_m)&\leq d(x, u_m) + d(y, u_m),\\
d(x,y)+d(u_m, v_m)&\leq d(x, v_m) + d(y, v_m).
\end{align*}
To this end, let $j\in \N$ be such that $d(x,y)=|x_j-y_j|$. Without loss of generality, we assume that $x_j\geq y_j$.
Notice that, if $j \not\in \{2m-1, 2m\}$, then $x_j\geq d(x,y)$, and therefore $d(x, u_m)\geq d(x,y)$ and $d(x, v_m)\geq d(x,y)$. 
If $j\in \{2m-1, 2m\}$, then $y_j\leq n-1-d(x,y)$ because $x_j\leq n-1$, and hence $d(y, u_m)\geq d(x,y)$ and $d(y, v_m)\geq d(x,y)$. 
Since $d(u_m, v_m)= 1$, the desired inequalities hold.
\end{proof}

%\newpage

Recall \cite[Definition 1.3]{MR4026495} that a metric space~$M$ has the \emph{strong long trapezoid property} (briefly, \emph {SLTP}\label{def: SLTP}) if, 
for every $\varepsilon>0$ and every finite subset $N$ of $M$, there exist elements $u,v\in M$ with $u\neq v$ satisfying, for all $x,y \in N$,
\begin{equation*}
(1-\varepsilon)\bigl(d(x,y)+d(u,v)\bigr)\leq d(x,u)+d(y,v),
\end{equation*}
and, for all $x,y,z,w \in N$,
\begin{equation*}
\begin{aligned}
(1-\varepsilon)&\bigl(d(x,y)+d(z,w)+2d(u,v)\bigr)\\
&\qquad\qquad\leq d(x,u)+d(y,u)+d(z,v)+d(w,v).
\end{aligned}
\end{equation*}

We end this section by giving an example of a metric space $M$ with the SLTP but without the seq-SLTP. 
In fact, this $M$ does not even have the seq-LTP (see Definition~\ref{def: seq-LTP} below). 
By \cite[Theorem 2.1]{MR4233633}, the space $\Lip_0(M)$ has the $w^*$-SSD$2$P. 
It remains unknown whether $\Lip_0(M)$ has the SSD$2$P.

\begin{ex}\label{ex: SLTP but not seq-SLTP}
Let $M = \{a_k, b_k, c_k\colon k\in \N\}$ be the metric space where, for every $k\in \N$, 
\[
d(a_k, c_k)= 2,
\]
and, for all $k,l\in \N$ with $k<l$, 
\[
d(a_k, b_l) = d(b_k, b_l) = d(c_k, b_l)=2,
\]
and the distance between two different elements is $1$ in all other cases.

We first show that the space $M$ has the SLTP. 
To this end, let $N$ be a finite subset of $M$. 
Then there exists a $K\in\N$ such that $N\subset\{a_k, b_k, c_k\colon k<K\}$. 
Let $u=b_K$ and $v = b_{K+1}$. Then, for every $x\in N$, one has $d(x,u)=2$ and $d(x,v) = 2$, and therefore, for all $x,y\in N$,
\[
d(x,y)+d(u,v) \leq 4 = d(x,u)+d(y,v),
\]
and, for all $x,y,z,w\in N$,
\[
d(x,y)+d(z,w)+2d(u,v)\leq 8 = d(x,u)+d(y,u)+d(z,v)+d(w,v).
\]

Now suppose for contradiction that $M$ has the seq-SLTP.
Then there exist pairwise disjoint subsets $A_1$, $A_2$, and $A_3$ of $M$
such that, for every $m\in\{1,2,3\}$, there are $u_m,v_m\in A_m$ with $u_m\neq v_m$
such that the inequality \eqref{eq: LTP} with $\eps< 1/3$ holds for all $x,y\in M\setminus A_m$.
Let $K\in \N$ be such that $u_1,v_1,u_2,v_2,u_3,v_3\in \{a_k, b_k, c_k\colon k<K\}$. 
For every $m\in\{1,2,3\}$, one has
\[
\bigl(1-\varepsilon\bigr)\bigl(d(a_K,c_K)+d(u_m,v_m)\bigr)
\geq 3(1-\varepsilon) > 2= d(a_K,u_m)+d(c_K,v_m),
\]
which implies $a_K\in A_m$ or $c_K\in A_m$.
It follows that $A_1$, $A_2$, and $A_3$ are not pairwise disjoint, a contradiction.
\end{ex}

\section%{The SD2P for spaces of Lipschitz functions}
{The \texorpdfstring{SSD$2$P}{SSD2P}, \texorpdfstring{SD$2$P}{SD2P}, and \texorpdfstring{D$2$P}{D2P} are three different properties for spaces of Lipschitz functions}

In this section, we give an example of a metric space $M$ 
such that the corresponding space $\Lip_0(M)$ has the SD$2$P but fails the SSD$2$P, 
and of a metric space~$M$ such that the corresponding space $\Lip_0(M)$ has the D$2$P but fails the SD$2$P, 
thus showing that the SSD$2$P, SD$2$P, and D$2$P are three different properties for the spaces of Lipschitz functions.
This answers an implicit question in \cite[Introduction]{MR4093788}.
For these two examples, we first give sufficient conditions for the space of Lipschitz functions to have the SD$2$P, and the D$2$P, as we did for the SSD$2$P in Section~2.
We start with an analogue of Lemma \ref{lem: main lemma for detecting the SSD2P} for the SD$2$P.

% Our first aim in this section is to give an example of a metric space $M$ 
% such that the corresponding space $\Lip_0(M)$ has the SD$2$P but fails the SSD$2$P
% (see Example~\ref{ex: seq-LTP but not SLTP}).
% For this example, we need an analogue of Theorem \ref{thm: SSD2P from seq-SLTP} for the SD$2$P.
% For such an analogue, in turn, we need the following analogue of Lemma \ref{lem: main lemma for detecting the SSD2P} for the SD$2$P.

%In this section, we give an example of a metric space $M$ 
%such that the corresponding space $\Lip_0(M)$ has the SD$2$P but fails the SSD$2$P
%(see Example \ref{ex: seq-LTP but not SLTP}), 
%and of a metric space~$M$ such the corresponding space $\Lip_0(M)$ has the D$2$P but fails the SD$2$P
%(see Example~\ref{ex: D2P but not LTP}), 
%thus showing that the SSD$2$P, SD$2$P, and D$2$P are three different properties for the space $\Lip_0(M)$.
%For the first of these examples, we need an analogue of Theorem \ref{thm: SSD2P from seq-SLTP} for the SD$2$P.
%For such an analogue, in turn, we need the following analogue of Lemma \ref{lem: main lemma for detecting the SSD2P} for the SD$2$P.

%In this section, we give two sufficient conditions for the space $\Lip_0(M)$ to have the SD$2$P. 
%We don't know whether these conditions are also necessary ones. 
%Our first sufficient condition is the following one.

\begin{lemma}\label{lem: main lemma for detecting the SD2P}
Let $M$ be a pointed metric space
and let $\widetilde{M}$ be as in \eqref{eq: Gamma:=(M times M) setminus ((x,x): x in M)}.
Suppose that, whenever $\delta>0$, $n\in\N$, $h_1,\dotsc,h_n\in\Lip_0(M)$ with $\|h_i\|\leq 1-\delta$ for every $i\in\{1,\dotsc,n\}$, 
and $\mu\in ba(\widetilde{M})$ with only non-negative values, 
there exist a subset $A$ of $M$, elements $u,v\in A$ with $u\neq v$,
and functions $f_1,\dotsc,f_n\in B_{\Lip_0(M)}$ satisfying
\begin{itemize}
\item
$\mu(\Gamma_{1,A})<\delta$ and $\mu(\Gamma_{2,A})<\delta$;
\item
$f_i|_{M\setminus A}=h_i|_{M\setminus A}$ for every $i\in\{1,\dotsc,n\}$;
\item
$f_i(u)-f_i(v)\geq (1-\delta)d(u,v)$ for every $i\in\{1,\dotsc,n\}$.
\end{itemize}
Then the space $\Lip_0(M)$ has the SD$2$P.
\end{lemma}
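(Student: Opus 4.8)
The plan is to reuse the opening moves of the proof of Lemma~\ref{lem: main lemma for detecting the SSD2P}, reduce the SD$2$P to producing two far-apart points of a convex combination of slices, and then exploit the common edge $(u,v)$ supplied by the hypothesis to witness a diameter close to $2$. Since a convex combination of subsets of $B_{\Lip_0(M)}$ always has diameter at most $2$, it suffices to fix $n\in\N$, slices $S_i=S(F_i,\alpha_i)$ with $F_i\in S_{\Lip_0(M)^\ast}$, convex weights $\lambda_1,\dotsc,\lambda_n\ge 0$ with $\sum_i\lambda_i=1$, and $\eps>0$, and to exhibit two elements of $\sum_{i=1}^n\lambda_i S_i$ at distance greater than $2-\eps$. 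As before, I would represent each $F_i$ by a measure $\mu_i\in ba(\widetilde M)$ with $\abs{\mu_i}(\widetilde M)=1$ via \eqref{eq: F(f)=int_Gamma f dmu for every f in Lip_0(M)}, set $\mu=\abs{\mu_1}+\dotsb+\abs{\mu_n}$ (which takes only non-negative values), fix $\delta>0$ small with $6\delta<\min_i\alpha_i$ and $2\delta<\eps$, and pick $h_i\in S(F_i,2\delta)$ with $\norm{h_i}\le 1-\delta$.

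The key idea --- and the step I expect to be the main obstacle --- is that the hypothesis only produces functions with a large \emph{positive} slope across $(u,v)$, whereas a diameter-two estimate needs two elements whose difference has slope close to $2$ at a common edge, i.e. one needs a ``reversed-slope'' partner for each $f_i$. I would obtain these partners for free by applying the hypothesis to the \emph{doubled} family $h_1,\dotsc,h_n,-h_1,\dotsc,-h_n$ (whose norms are still at most $1-\delta$) together with the measure $\mu$. This yields a single subset $A$, a single pair $u,v\in A$ with $u\ne v$, and functions $f_1,\dotsc,f_n,f_1',\dotsc,f_n'\in B_{\Lip_0(M)}$ such that, for every $i$, $f_i|_{M\setminus A}=h_i|_{M\setminus A}$, $f_i'|_{M\setminus A}=-h_i|_{M\setminus A}$, both $f_i(u)-f_i(v)$ and $f_i'(u)-f_i'(v)$ are at least $(1-\delta)d(u,v)$, and $\mu(\Gamma_{1,A}),\mu(\Gamma_{2,A})<\delta$. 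Setting $g_i=-f_i'$, one gets functions that agree with $h_i$ off $A$, satisfy $\norm{g_i}\le 1$, and crucially have the \emph{reversed} slope $g_i(u)-g_i(v)\le-(1-\delta)d(u,v)$ across the \emph{same} edge $(u,v)$.

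Next I would verify, exactly as in the proof of Lemma~\ref{lem: main lemma for detecting the SSD2P}, that all these functions lie in the required slices. Writing $\Gamma_A=\Gamma_{1,A}\cup\Gamma_{2,A}$, one has $\mu(\Gamma_A)<2\delta$, hence $\abs{\mu_i}(\Gamma_A)<2\delta$; since $\widetilde f_i=\widetilde h_i$ off $\Gamma_A$, the same short computation as before gives $F_i(f_i)>1-6\delta$, and since $\widetilde{f_i'}=-\widetilde h_i$ off $\Gamma_A$, it gives $F_i(g_i)=-F_i(f_i')>1-6\delta$. By the choice of $\delta$, both $f_i$ and $g_i$ then belong to $S(F_i,\alpha_i)=S_i$, so the two points
\[
\xi=\sum_{i=1}^n\lambda_i f_i
\qquad\text{and}\qquad
\eta=\sum_{i=1}^n\lambda_i g_i
\]
both lie in $\sum_{i=1}^n\lambda_i S_i$.

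Finally, I would estimate the distance between $\xi$ and $\eta$ using the common edge. Since every $f_i$ has slope at least $1-\delta$ and every $g_i$ has slope at most $-(1-\delta)$ across $(u,v)$, passing to convex combinations yields $\xi(u)-\xi(v)\ge(1-\delta)d(u,v)$ and $\eta(u)-\eta(v)\le-(1-\delta)d(u,v)$, whence
\[
\norm{\xi-\eta}\ge\frac{(\xi-\eta)(u)-(\xi-\eta)(v)}{d(u,v)}\ge 2(1-\delta)>2-\eps.
\]
Thus $\sum_{i=1}^n\lambda_i S_i$ has diameter greater than $2-\eps$; as $\eps>0$ and the slices were arbitrary, $\Lip_0(M)$ has the SD$2$P. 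The only genuinely new ingredient compared with the SSD$2$P lemma is the doubling-with-negatives device that manufactures the reversed-slope partners $g_i$ on the \emph{same} edge $(u,v)$; everything else is a routine adaptation of the earlier estimates.
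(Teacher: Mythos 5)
Your argument is correct, but it takes a genuinely different route from the paper's. The paper never touches convex combinations of slices directly: invoking \cite[Corollary 2.2]{MR3150166} and \cite[Proposition 2.2]{MR3346197}, it reduces the SD$2$P to an octahedrality-type condition on the dual norm, namely that for $F_1,\dotsc,F_n\in S_{\Lip_0(M)^*}$ and $\eps>0$ one can find $u,v\in M$ with $\norm{F_i+m_{u,v}}\geq 2-\eps$ for every $i$. A single application of the hypothesis to $h_1,\dotsc,h_n$ then finishes: the same integral estimate you carry out gives $F_i(f_i)>1-6\delta$, and the third bullet gives $m_{u,v}(f_i)\geq 1-\delta$, so $(F_i+m_{u,v})(f_i)>2-7\delta$. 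You instead verify the definition of the SD$2$P head-on, which forces you to produce, in each slice, a second element whose slope across the \emph{same} edge $(u,v)$ is reversed; your doubling device --- applying the hypothesis to $h_1,\dotsc,h_n,-h_1,\dotsc,-h_n$ and negating the second half of the output --- achieves this correctly, and all the subsequent estimates (slice membership of both $f_i$ and $g_i=-f_i'$, and the lower bound $\norm{\xi-\eta}\geq 2(1-\delta)$ via the common edge) check out. What each approach buys: yours is self-contained modulo the de Leeuw machinery already set up, at the cost of the doubling trick and of having to track two families of functions; the paper's is shorter and makes visible that the common witness is the single molecule $m_{u,v}$ in the predual $\mathcal{F}(M)$, at the price of importing the equivalence between the SD$2$P of a space and octahedrality of its dual norm.
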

\begin{proof}
Let $n\in\N$, let $F_1,\dots,F_n\in S_{\Lip_0(M)^\ast}$, and let $\eps>0$. 
By \cite[Corollary 2.2]{MR3150166} and \cite[Proposition 2.2]{MR3346197}, 
it suffices to find $u,v\in M$  such that $\|F_i + m_{u,v}\|\geq 2-\varepsilon$ for every $i\in\{1,\dotsc,n\}$.

For every $i\in\{1,\dotsc,n\}$, let $\mu_i\in ba(\widetilde{M})$ with $|\mu_i|(\widetilde{M})=1$
satisfy \eqref{eq: F(f)=int_Gamma f dmu for every f in Lip_0(M)}
with~$F$ and $\mu$ replaced by $F_i$ and~$\mu_i$, respectively.
Define $\mu=|\mu_1|+\dotsb+|\mu_n|$.
Fix a real number $\delta>0$ satisfying $7\delta\leq\varepsilon$.
For every $i\in\{1,\dotsc,n\}$, pick a function $h_i\in S(F_i,2\delta)$ with $\|h_i\|\leq 1-\delta$.

Let a subset $A$ of $M$, elements $u,v\in A$ with $u\neq v$, and functions $f_1,\dotsc,f_n\in\Lip_0(M)$ satisfy
the conditions in the lemma. Setting $\Gamma_A = \Gamma_{1,A}\cup\Gamma_{2,A}$, one has $\mu(\Gamma_A)<2\delta$,
hence, whenever $i\in\{1,\dotsc,n\}$, (observing that $\widetilde{f}_i|_{\widetilde{M}\setminus\Gamma_A}=\widetilde{h}_i|_{\widetilde{M}\setminus\Gamma_A}$)
\begin{align*}
F_i(f_i)
&=\int_{\widetilde{M}}\widetilde{f}_i\,d\mu_i
%=\int_{\widetilde{M}\setminus\Gamma_A}\widetilde{f_i}\,d\mu_i+\int_{\Gamma_A}\widetilde{f_i}\,d\mu_i
=\int_{\widetilde{M}}\widetilde{h}_i\,d\mu_i+\int_{\Gamma_A}\bigl(\widetilde{f}_i-\widetilde{h}_i\bigr)\,d\mu_i\\
&=F_i(h_i)+\int_{\Gamma_A}\bigl(\widetilde{f}_i-\widetilde{h}_i\bigr)\,d\mu_i\\
&>1-2\delta-2|\mu_i|(\Gamma_A)
>1-6\delta,
\end{align*}
and thus
\begin{align*}
(F_i+ m_{u,v})(f_i)
&= F_i(f_i)+\frac{f_i(u)-f_i(v)}{d(u,v)}>1-6\delta+1-\delta\geq 2-\varepsilon.
\end{align*}
\end{proof}

We next prove (and then use) a further sufficient condition for $\Lip_0(M)$ to have the SD$2$P%
---an analogue of Theorem \ref{thm: SSD2P from seq-SLTP}---%
which involves only conditions on the metric of the space $M$ and is therefore easy to handle.

\begin{defn}[cf. {\cite[Theorem 3.1, (3)]{MR3803112}}]\label{def: seq-LTP}
We say that a metric space $M$ has the \emph{sequential long trapezoid property} (briefly, \emph{seq-LTP}) if, for every $\varepsilon>0$, there exist pairwise disjoint subsets $A_1, A_2,\dotsc$ of $M$ such that,
for every $m\in\N$, there are $u_m, v_m\in A_m$ with $u_m\neq v_m$ satisfying, for all $x,y \in M\setminus A_m$,
\begin{equation}\label{eq: seq-LTP}
(1-\varepsilon)\bigl(d(x,y)+d(u_m,v_m)\bigr)\leq d(x,u_m)+d(y,v_m).
\end{equation}
\end{defn}
\noindent
Clearly every metric space with the seq-SLTP has the seq-LTP as the conditions (\ref{eq: LTP}) and (\ref{eq: seq-LTP}) are the same.

\begin{thm}[cf. {\cite[Theorem 3.1, (3)$\Rightarrow$(1)]{MR4026495}}]\label{thm: SD2P from seq-LTP}
Let $M$ be a pointed metric space. If $M$ has the seq-LTP, then $\Lip_0(M)$ has the SD$2$P.
\end{thm}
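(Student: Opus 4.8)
The plan is to mimic the structure of the proof of Theorem~\ref{thm: SSD2P from seq-SLTP}, but now targeting the weaker conclusion (SD$2$P) from the weaker hypothesis (seq-LTP). By Lemma~\ref{lem: main lemma for detecting the SD2P}, it suffices to verify its hypotheses: given $\delta>0$, $n\in\N$, functions $h_1,\dotsc,h_n\in\Lip_0(M)$ with $\|h_i\|\leq 1-\delta$, and a non-negative $\mu\in ba(\widetilde{M})$, I must produce a set $A$, points $u,v\in A$ with $u\neq v$, and functions $f_1,\dotsc,f_n\in B_{\Lip_0(M)}$ such that $\mu(\Gamma_{1,A})<\delta$, $\mu(\Gamma_{2,A})<\delta$, each $f_i$ agrees with $h_i$ off $A$, and $f_i(u)-f_i(v)\geq(1-\delta)d(u,v)$.

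First I would invoke the seq-LTP with $\varepsilon=\delta$ to obtain pairwise disjoint sets $A_1,A_2,\dotsc$ and points $u_m,v_m\in A_m$ satisfying \eqref{eq: seq-LTP}. Since these sets are pairwise disjoint and $\mu$ is finite (being of the form $|\mu_1|+\dotsb+|\mu_n|$ in the application, hence of finite total variation), only finitely many $m$ can have $\mu(\Gamma_{1,A_m})\geq\delta$ or $\mu(\Gamma_{2,A_m})\geq\delta$; so I can pick an index $m$ with $\mu(\Gamma_{1,A_m})<\delta$ and $\mu(\Gamma_{2,A_m})<\delta$, and set $A=A_m$, $u=u_m$, $v=v_m$. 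This is exactly the selection step used in the SSD$2$P proof.

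The heart of the matter is the construction of the $f_i$. The key simplification relative to Theorem~\ref{thm: SSD2P from seq-SLTP} is that now I only need \emph{one} direction of separation (the value $f_i(u)-f_i(v)$ should be large) rather than the two-sided perturbation $f_i\pm g$, so only the long trapezoid inequality \eqref{eq: seq-LTP} is needed, not the strong version \eqref{eq: SLTP}. The plan is to keep $f_i=h_i$ on $M\setminus A$ and then prescribe $f_i(u)=\widehat{a}_i$ and $f_i(v)=\widecheck{b}_i$, where
\[
\widehat{a}_i=\inf_{x\in M\setminus A}\bigl(h_i(x)+d(x,u)\bigr),
\qquad
\widecheck{b}_i=\sup_{x\in M\setminus A}\bigl(h_i(x)-d(x,v)\bigr),
\]
i.e.\ I use the maximal admissible value at $u$ and the minimal admissible value at $v$ to force the gap to be large. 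Using $\|h_i\|\leq 1-\delta$ one checks these choices are $1$-Lipschitz-compatible with $h_i$ at $u$ and $v$ separately, and using \eqref{eq: seq-LTP} (applied to arbitrary $x,y\in M\setminus A$) one verifies the crucial inequality $\widehat{a}_i-\widecheck{b}_i\geq(1-\delta)d(u,v)$, which yields $f_i(u)-f_i(v)\geq(1-\delta)d(u,v)$. Finally I extend $f_i$ to all of $M$ by the standard maximal (or minimal) $1$-Lipschitz extension
\[
f_i(y)=\sup_{x\in (M\setminus A)\cup\{u,v\}}\bigl(f_i(x)-d(x,y)\bigr)
\quad\text{for }y\in A\setminus\{u,v\},
\]
which preserves $\|f_i\|\leq 1$ and leaves the values on $M\setminus A$ and at $u,v$ unchanged.

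The main obstacle I anticipate is verifying that the prescribed values $f_i|_{M\setminus A}=h_i|_{M\setminus A}$, $f_i(u)=\widehat{a}_i$, $f_i(v)=\widecheck{b}_i$ are jointly $1$-Lipschitz, i.e.\ mutually compatible, so that the extension step is legitimate. This requires the four estimates: the value at $u$ is compatible with $h_i$ on $M\setminus A$ (from the definition of $\widehat{a}_i$ and $\|h_i\|\leq 1$), the value at $v$ similarly, and the gap $\widehat{a}_i-\widecheck{b}_i$ lies in the admissible range $[(1-\delta)d(u,v),\,d(u,v)]$—the lower bound coming precisely from \eqref{eq: seq-LTP} and the upper bound from $\|h_i\|\leq 1$. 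Once these compatibility inequalities are in place, the rest is the routine McShane--Whitney extension argument, and I would point the reader to the corresponding parts of \cite[proof of Theorem~3.1]{MR4026495}, noting that the argument reads essentially verbatim with $N$ replaced by $M\setminus A$.
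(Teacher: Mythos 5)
Your overall strategy is the same as the paper's: reduce to Lemma~\ref{lem: main lemma for detecting the SD2P}, use disjointness of the $A_m$ to pick one with $\mu(\Gamma_{1,A_m})<\delta$ and $\mu(\Gamma_{2,A_m})<\delta$, keep $f_i=h_i$ off $A$, push $f_i(u)$ up to the maximal admissible value $\widehat{a}_i=\inf_{x\in M\setminus A}(h_i(x)+d(x,u))$, and extend by McShane--Whitney. However, there is a genuine gap in the prescription $f_i(v)=\widecheck{b}_i=\sup_{x\in M\setminus A}(h_i(x)-d(x,v))$. You claim that $\widehat{a}_i-\widecheck{b}_i\leq d(u,v)$ follows from $\|h_i\|\leq 1$, but it does not: taking $x=y$ in
\[
\widehat{a}_i-\widecheck{b}_i=\inf_{x,y\in M\setminus A}\bigl(h_i(x)-h_i(y)+d(x,u)+d(y,v)\bigr)
\]
only gives the bound $\inf_{x\in M\setminus A}\bigl(d(x,u)+d(x,v)\bigr)$, which by the triangle inequality is \emph{at least} $d(u,v)$ and can be much larger. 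Concretely, if every point of $M\setminus A$ is at distance about $10$ from both $u$ and $v$ while $d(u,v)=1$ and $h_i\equiv 0$ near $A$, then $\widehat{a}_i\approx 10$ and $\widecheck{b}_i\approx -10$, so your prescribed values satisfy $|f_i(u)-f_i(v)|\approx 20>d(u,v)$; such configurations are perfectly compatible with the seq-LTP (the inequality \eqref{eq: seq-LTP} is a \emph{lower} bound on $d(x,u)+d(y,v)$, so it cannot cap the gap from above). Consequently the data you feed into the extension step need not be jointly $1$-Lipschitz, and the extension argument breaks down.

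The fix is exactly what the paper does: after setting $f_i(u)=\widehat{a}_i$, define $f_i$ on all of $A\setminus\{u\}$ (including $v$) by the McShane extension from $\{u\}\cup(M\setminus A)$, i.e.
\[
f_i(y)=\sup_{x\in\{u\}\cup (M\setminus A)}\bigl(f_i(x)-d(x,y)\bigr),
\]
so that in particular $f_i(v)=\max\bigl\{\widecheck{b}_i,\ f_i(u)-d(u,v)\bigr\}$. This caps the gap at $d(u,v)$ automatically (so $\|f_i\|\leq 1$), and the required lower bound still holds by a case split: either $f_i(v)=f_i(u)-d(u,v)$, in which case $f_i(u)-f_i(v)=d(u,v)\geq(1-\delta)d(u,v)$, or $f_i(v)=\widecheck{b}_i$, in which case your computation via \eqref{eq: seq-LTP} gives $f_i(u)-f_i(v)=\widehat{a}_i-\widecheck{b}_i\geq(1-\delta)d(u,v)$. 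With this one correction your argument coincides with the paper's proof.
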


\begin{rem}
We do not know whether the converse of Theorem~\ref{thm: SD2P from seq-LTP} holds. 
Note that the seq-LTP is strictly stronger than the LTP (see Example~\ref{ex: SLTP but not seq-SLTP} above).
\end{rem}

\begin{proof}[Proof of Theorem~\ref{thm: SD2P from seq-LTP}]
Assume that $M$ has the seq-LTP. Let $\delta>0$, let $n\in \mathbb{N}$, let $h_1,\dotsc, h_n\in \Lip_0(M)$ with $\|h_i\|\leq 1-\delta$ for every $i\in \{1,\ldots, n\}$, and let $\mu\in ba(\widetilde{M})$ with only non-negative values where $\widetilde{M}$ is as in $\eqref{eq: Gamma:=(M times M) setminus ((x,x): x in M)}$. By Lemma \ref{lem: main lemma for detecting the SD2P},
it suffices to find a subset $A$ of $M$, elements $u,v\in A$ with $u\neq v$, and functions $f_1,\dotsc,f_n\in\Lip_0(M)$
satisfying the conditions of that lemma. 

By the seq-LTP, there exist subsets $A_1, A_2, \dots$ of $M$ and points $u_m, v_m\in A_m$, $m=1,2,\dots$, as in Definition~\ref{def: seq-LTP} with $\varepsilon=\delta$. Since the sets $A_1, A_2,\dotsc$ are pairwise disjoint, there exists an $m\in \mathbb{N}$ such that $\mu(\Gamma_{1,A_m})<\delta$ and $\mu(\Gamma_{2,A_m})<\delta$. Let $A = A_m$,  $u = u_m$, and $v=v_m$.  

Fix $i\in \{1,\dotsc,n\}$. Define the function $f_i$ by $f_i|_{M\setminus A} = h_i|_{M\setminus A}$,
\[
    f_i(u) = \inf_{x\in M\setminus A}\big(f_i(x)+d(x,u)\big),
\]
and
\[
    f_i(y) = \sup_{x\in \{u\}\cup M\setminus A}\big(f_i(x)-d(x,y)\big) \quad\text{for every $y\in A\setminus \{u\}$}.
\]
Since $\|f_i\|\leq 1$, it remains to show that $f_i(u)-f_i(v)\geq (1-\delta)d(u,v)$. If $f_i(v) = f_i(u)-d(u,v)$, then the inequality holds. Suppose now that this is not the case. Then
\begin{align*}
f_i(u)-f_i(v) &= \inf_{x,y\in M\setminus A}\big(f_i(x)+d(x,u)-f_i(y)+d(y,v)\big)\\
    &\geq \inf_{x,y\in M\setminus A}\big(-(1-\delta)d(x,y)+ d(x,u)+d(y,v)\big)\\
    &\geq (1-\delta) d(u,v).
\end{align*}
\end{proof}

The following example from \cite[Example 3.1]{MR4026495} was the first known example of a metric space $M$ which has the LTP but not the SLTP or,
equivalently, for which the corresponding Lipschitz function space $\Lip_0(M)$ has the $w^*$-SD$2$P but not the $w^*$-SSD$2$P. 
We further show that $M$ has the seq-LTP. Therefore, $\Lip_0(M)$ has the SD$2$P but not the $w^*$-SSD$2$P. 
To our knowledge, this is the first known example of such a Lipschitz function space, 
showing that the properties SD$2$P and ($w^*$-)SSD$2$P are really different for the class of Lipschitz function spaces.

\begin{ex}\label{ex: seq-LTP but not SLTP}
Let $M=\{a_1, a_2, b_1, b_2\}\cup \{u_m,v_m\colon m\in \mathbb{N}\}$ be the metric space where, 
for all $i,j\in \{1,2\}$ and all $m\in \mathbb{N}$,
\begin{align*}
d(a_i,b_j)=d(a_i,u_m)=d(b_i,v_m)=d(u_m,v_m)=1,
\end{align*}
and the distance between two different elements is $2$ in all other cases.

We show that $M$ has the seq-LTP. 
To this end, it suffices to show that, whenever $m\in\N$, one has, for all $x,y\in M\setminus A_m$,
\begin{equation*}
d(x,y)+d(u_m,v_m)\leq d(x,u_m)+d(y,v_m),
\end{equation*}
where $A_m=\{u_m,v_m\}$. Fix an $m\in \N$ and let $x,y\not\in A_m$. 
If $d(x,u_m)+d(y,v_m)\geq 3$, then the desired inequality holds because $d(u_m,v_m)=1$. 
If $d(x,u_m)+d(y,v_m) = 2$, then $x\in \{a_1,a_2\}$ and $y\in \{b_1,b_2\}$, and therefore $d(x,y) = 1$;
thus the desired inequality holds.
\end{ex}

%\section{The D$2$P for spaces of Lipschitz functions\\
%Sufficient conditions for the space \texorpdfstring{$\Lip_0(M)$}{Lip(M)} to have the D$2$P}

Our second aim in this section is to give an example of a metric space $M$ 
such the corresponding space $\Lip_0(M)$ has the D$2$P but fails the SD$2$P.
For this example, we need  a sufficient condition for the space $\Lip_0(M)$ to have the D$2$P.
Here we do not have a condition which involves only the metric of the underlying space $M$ 
as we did with Theorems \ref{thm: SSD2P from seq-SLTP} and \ref{thm: SD2P from seq-LTP} for the SSD$2$P and the SD$2$P, respectively.
However, the following analogue of Lemmas \ref{lem: main lemma for detecting the SSD2P}
and \ref{lem: main lemma for detecting the SD2P} for the D$2$P is suitable for our purposes.
 
%In this section, we give a sufficient condition for the space $\Lip_0(M)$ to have the D$2$P. 
% We don't know whether this condition is also a necessary one. Here we don't have a sufficient condition which involves only the underlying metric %space $M$ as we had in the previous two sections for the SSD2P and the SD2P. However, we are able to use this condition directly to show that the %($w^*$-)SD2P and the D2P are different for the spaces of Lipschitz functions. It remains open whether the D2P and the slice-D2P are different for %the spaces of Lipschitz functions.

\begin{lemma}\label{lem: main lemma for detecting the D2P}
Let $M$ be a pointed metric space
and let $\widetilde{M}$ be as in \eqref{eq: Gamma:=(M times M) setminus ((x,x): x in M)}.
Suppose that, whenever $\delta>0$, $h\in\Lip_0(M)$ with $\|h\|\leq 1-\delta$, and $\mu\in ba(\widetilde{M})$ with only non-negative values,
there exist a subset $A$ of $M$, elements $u,v\in A$ with $u\neq v$,
and functions $f, g \in B_{\Lip_0(M)}$ satisfying
\begin{itemize}
\item
$\mu(\Gamma_{1,A})<\delta$ and $\mu(\Gamma_{2,A})<\delta$;
\item
$f|_{M\setminus A}=g|_{M\setminus A} = h|_{M\setminus A}$;
\item
$f(u)-f(v)\geq (1-\delta)d(u,v)$ and $g(u)-g(v)\leq -(1-\delta)d(u,v)$.
\end{itemize}
Then the space $\Lip_0(M)$ has the D$2$P.
\end{lemma}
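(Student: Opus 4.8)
The plan is to verify the definition of the D$2$P directly, since---unlike the SD$2$P---there is no convenient reformulation of the D$2$P as a norm condition on the dual that we could exploit as in Lemma~\ref{lem: main lemma for detecting the SD2P}. Fix a nonempty relatively weakly open subset $W$ of $B_{\Lip_0(M)}$; it suffices to show that for every $\eps>0$ there are $f,g\in W$ with $\|f-g\|>2-\eps$. Choose a point $h_0\in W$, functionals $F_1,\dotsc,F_n\in S_{\Lip_0(M)^\ast}$, and $\gamma>0$ so that the basic relatively weakly open set
\[
V=\bigl\{f\in B_{\Lip_0(M)}\colon |F_i(f)-F_i(h_0)|<\gamma \text{ for every } i\in\{1,\dotsc,n\}\bigr\}
\]
satisfies $V\subseteq W$. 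Since $th_0\to h_0$ weakly as $t\to1^-$ and $V$ is relatively weakly open, we may replace $h_0$ by $th_0$ for $t<1$ close to $1$ and thereby assume in addition that $\|h_0\|<1$; this is the one genuine preliminary, needed so that the hypothesis (which is stated for a single $h$ of norm at most $1-\delta$) can be applied with $h=h_0$.

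Next I would represent the data for the hypothesis. For each $i$, let $\mu_i\in ba(\widetilde{M})$ with $|\mu_i|(\widetilde{M})=1$ satisfy \eqref{eq: F(f)=int_Gamma f dmu for every f in Lip_0(M)} for $F_i$, and put $\mu=|\mu_1|+\dotsb+|\mu_n|$, which takes only non-negative values. Given $\eps>0$, fix $\delta>0$ with $2\delta<\eps$, $4\delta<\gamma$, and $\delta\leq 1-\|h_0\|$, and set $h=h_0$, so that $\|h\|\leq1-\delta$. Applying the hypothesis to $\delta$, $h$, and $\mu$ produces a subset $A$ of $M$, points $u,v\in A$ with $u\neq v$, and functions $f,g\in B_{\Lip_0(M)}$ with the three listed properties.

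It then remains to check the two required conclusions. Writing $\Gamma_A=\Gamma_{1,A}\cup\Gamma_{2,A}$, one has $\mu(\Gamma_A)<2\delta$, hence $|\mu_i|(\Gamma_A)<2\delta$ for every $i$. Since $f$ and $h$ agree off $A$, the transforms $\widetilde f$ and $\widetilde h$ agree on $\widetilde M\setminus\Gamma_A$, so
\[
|F_i(f)-F_i(h_0)|=\Bigl|\int_{\Gamma_A}\bigl(\widetilde f-\widetilde h\bigr)\,d\mu_i\Bigr|\leq 2|\mu_i|(\Gamma_A)<4\delta<\gamma,
\]
whence $f\in V\subseteq W$; the same estimate gives $g\in W$. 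Finally, evaluating the Lipschitz norm at the pair $(u,v)$ and using the opposite sign conditions $f(u)-f(v)\geq(1-\delta)d(u,v)$ and $g(u)-g(v)\leq-(1-\delta)d(u,v)$ yields
\[
\|f-g\|\geq\frac{\bigl(f(u)-f(v)\bigr)-\bigl(g(u)-g(v)\bigr)}{d(u,v)}\geq 2(1-\delta)>2-\eps,
\]
as desired. The computations are entirely parallel to those in the proofs of Lemmas~\ref{lem: main lemma for detecting the SSD2P} and \ref{lem: main lemma for detecting the SD2P}; I expect the only delicate point to be the reduction to an interior point $h_0$ of norm $<1$ together with the bookkeeping of $\delta$ relative to $\gamma$, ensuring that both perturbed functions $f$ and $g$---which differ from $h_0$ only on the $\mu$-small set $A$---stay inside the neighborhood $V$ while being driven almost antipodal at $(u,v)$.
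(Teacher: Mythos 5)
Your proposal is correct and follows essentially the same route as the paper's proof: reduce to a basic weakly open neighbourhood, represent the $F_i$ by measures on $\widetilde{M}$ via de Leeuw's transform, apply the hypothesis to a slightly scaled copy of the centre point so that its norm drops below $1-\delta$, and estimate the perturbation on $\Gamma_A$. The only cosmetic difference is that you rescale the centre $h_0$ to $th_0$ before invoking the hypothesis, whereas the paper keeps the centre $\phi$ and applies the hypothesis to $h=(1-\delta)\phi$, absorbing the discrepancy into an extra additive $\delta$ in the estimate.
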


%\begin{rem}
%
%\end{rem}

%
\begin{proof}[Proof of Lemma \ref{lem: main lemma for detecting the D2P}]
Let $n\in\N$, let $F_1,\dotsc, F_n\in S_{\Lip_0(M)^\ast}$, let $\varepsilon>0$, and let $\phi\in B_{\Lip_0(M)}$.
It suffices to find $f,g\in \Lip_0(M)$ with $\|f\|\leq 1$ and $\|g\| \leq 1$ such that $|F_i(f-\phi)|<\varepsilon$ and $|F_i(g-\phi)|<\varepsilon$ for every $i\in \{1,\dotsc, n\}$, and $\|f-g\|> 2-\varepsilon$. 

For every $i\in\{1,\dotsc,n\}$, let $\mu_i\in ba(\widetilde{M})$ with $|\mu_i|(\widetilde{M})=1$
satisfy \eqref{eq: F(f)=int_Gamma f dmu for every f in Lip_0(M)}
with $F$ and $\mu$ replaced by $F_i$ and~$\mu_i$, respectively.
Define $\mu=|\mu_1|+\dotsb+|\mu_n|$.
Fix a real number $\delta>0$ satisfying $5\delta\leq\varepsilon$. Let $h=(1-\delta)\phi$.

Let a subset $A$ of $M$, elements $u,v\in A$ with $u\neq v$, and functions $f,g\in\Lip_0(M)$ satisfy the conditions in the lemma.
Setting $\Gamma_A=\Gamma_{1,A}\cup\Gamma_{2,A}$, one has $\mu(\Gamma_A)<2\delta$,
hence, for every $i\in \{1,\dotsc, n\}$ (observing that $\widetilde{f}|_{\widetilde{M}\setminus\Gamma_A}=\,\widetilde{h}|_{\widetilde{M}\setminus\Gamma_A}$),
\begin{align*}
|F_i(f-\phi)|&\leq|F_i(f-h)|+\delta
=\bigg|\int_{\widetilde{M}}\bigl(\widetilde{f}-\widetilde{h}\bigr)\,d\mu_i\bigg|+\delta
%=\int_{\widetilde{M}\setminus\Gamma_A}\widetilde{f_i}\,d\mu_i+\int_{\Gamma_A}\widetilde{f_i}\,d\mu_i
=\bigg|\int_{\Gamma_A}\bigl(\widetilde{f}-\widetilde{h}\bigr)\,d\mu_i\bigg|+\delta\\
&\leq 2|\mu_i|(\Gamma_A)+\delta < 5\delta \leq \varepsilon.
\end{align*}
Similarly, $|F_i(g-\phi)|< \varepsilon$ for every $i\in\{1,\dotsc,n\}$. It remains to observe that
\begin{align*}
    \|f-g\|&\geq \frac{(f-g)(u)-(f-g)(v)}{d(u,v)} = \frac{f(u)-f(v)-g(u)+g(v)}{d(u,v)}\\
    &\geq \frac{2(1-\delta)\,d(u,v)}{d(u,v)} = 2(1-\delta)>2-\varepsilon.
\end{align*}
\end{proof}

We now introduce a space $M$ for which the corresponding Lipschitz function space $\Lip_0(M)$ has the D$2$P but not the ($w^*$-)SD$2$P. 
To our knowledge this is the first such example in the class of Lipschitz function spaces.

Recall \cite[Theorem 3.1, (3)]{MR3803112} that  a metric space~$M$ has the \emph{long trapezoid property} (briefly, \emph {LTP}\label{def: LTP}) if, 
for every $\varepsilon>0$ and every finite subset $N$ of $M$, there exist elements $u,v\in M$ with $u\neq v$ satisfying, for all $x,y \in N$,
\begin{equation*}
(1-\varepsilon)\bigl(d(x,y)+d(u,v)\bigr)\leq d(x,u)+d(y,v).
\end{equation*}

\begin{ex}\label{ex: D2P but not LTP}
Let $M = \big\{a_i, u^i_m, v^i_m \colon i\in \{1,2,3\}, m\in \N\big\}$ be the metric space 
where, for all $i,j\in\{1,2,3\}$ with $i\neq j$ and all $m\in \N$, 
\[
    d(a_i, u_m^j)= d(a_i, v_m^j) = 1,
\]
and, for all $j\in\{1,2,3\}$ and all $m\in\N$,
\[
d(u_m^j, v_m^j) = 1,
\]
and the distance between two different elements is $2$ in all other cases.

We first show that the space $\Lip_0(M)$ has the D$2$P. We make use of Lemma~\ref{lem: main lemma for detecting the D2P}. 
Let $\delta>0$, let $h\in \Lip_0(M)$ with $\|h\|\leq 1$, and let $\mu\in ba(\widetilde{M})$ with only non-negative values. 
We may assume that $h(a_1)\leq h(a_2)\leq h(a_3)$. 
%Then $h(a_2)-h(a_1)\leq 1$ or $h(a_3)-h(a_2)\leq 1$. 
Set $L=\inf h(M)$.
If $h(a_2)\leq L+1$, then let $k=3$ and $c = L$; 
otherwise, let $k=1$ and $c=L+1$.
Choose an $m\in \N$ so that $\mu(\Gamma_{1,A})<\delta$ and $\mu(\Gamma_{2,A})<\delta$ where  $A=\{u_m^k, v_m^k\}$. 
Let $u = u_m^k$ and $v = v_m^k$, and define $f, g\colon M\to \R$ by
\begin{align*}
    f(x) = 
    \begin{cases}
    h(x) &\quad\text{if $x\in M\setminus A$};\\
    c+1 &\quad\text{if $x = u$};\\
    c &\quad\text{if $x = v$},
    \end{cases}
    \qquad \text{and}\qquad
    g(x) = 
    \begin{cases}
    h(x) &\quad\text{if $x\in M\setminus A$};\\
    c &\quad\text{if $x = u$};\\
    c+1 &\quad\text{if $x = v$}.
    \end{cases}
\end{align*}
Then 
\[
f(u) -f(v) = g(v)-g(u)= 1 =d(u,v).
\]
It is straightforward to verify that $\|f\| = 1$ and $\|g\| = 1$. 
By Lemma \ref{lem: main lemma for detecting the D2P}, $\Lip_0(M)$ has the D$2$P.

We now show that the space $\Lip_0(M)$ does not have the $w^*$-SD$2$P. 
It suffices to show that space $M$ does not have the LTP. 
Let $N = \{a_1,a_2,a_3\}$ and let $\varepsilon < 1/3$. 
Whenever $u,v\in M$ with $u\neq v$, there exist $x,y\in N$ with $x\neq y$ such that $d(x,u)\leq1$ and $d(y,v)\leq1$. 
Since $d(x,y)=2$, one has
\[
(1-\varepsilon)\bigl(d(u,v)+d(x,y)\bigr)\geq 3(1-\varepsilon)>2 \geq d(x,u)+d(y,v).
\]
\end{ex}

\section{Local norm-one Lipschitz function is a Daugavet point}

Let $M$ be a pointed metric space. In this section, we show that certain norm-one elements $f$ of $\Lip_0(M)$ are \emph{Daugavet points}, 
i.e, given a slice $S$ of the unit ball of $\Lip_0(M)$ and an $\varepsilon>0$, there exists a $g\in S$ with $\|f- g\| > 2 - \varepsilon$.

\begin{defn}[see {\cite[Definition~2.5]{MJAR}}]
A function $f\in \Lip_0(M)$ is said to be \emph{local} if, for every $\varepsilon>0$, there are $u,v\in M$ with $u\neq v$ such that $d(u,v)<\varepsilon$ and $f(m_{u,v})>\|f\|-\varepsilon$.
\end{defn}

The question of whether every local $f$ in the unit sphere of $\Lip_0(M)$ is a Daugavet point was posed and addressed in \cite{MJAR}; there it was shown that
\begin{enumerate}
\item every local $f$ on the unit sphere of $\Lip_0(M)$ is a weak$^*$ Daugavet point, i.e., given a weak$^*$ slice $S$ of the unit ball of $\Lip_0(M)$ and an $\varepsilon>0$, there exists $g\in S$ with $\|f - g\| > 2- \varepsilon$;
\item every spreadingly local $f$ (see \cite[Definition~2.5]{MJAR}) in the unit sphere of $\Lip_0(M)$ is a Daugavet point.
\end{enumerate}
The result (2) is yielded as a local argument of \cite[Theorem 3.1]{MR2379289} 
which says that if $M$ is spreadingly local then $\Lip_0(M)$ has the Daugavet property. 
The way we look at $\Lip_0(M)^*$ allows us to improve upon all these results and fully answer the aforementioned question with Theorem~\ref{thm: If f in Lip_0(M) is local, then f is a Daugavet point}.
The latter is in fact a straightforward consequence of the following result.

\begin{prop}[cf. {\cite[Theorem~2.6]{MJAR}}]\label{prop: d(u_n,v_n)->0}
Let $M$ be a pointed metric space, let $F\in S_{\Lip_0(M)^*}$, 
and let $(u_n)$ and $(v_n)$ be two sequences of elements in $M$ such that $u_n\neq v_n$ for every $n\in\mathbb N$. 
If $d(u_n,v_n)\to 0$, then $\|F+m_{u_n,v_n}\|\to 2$.
\end{prop}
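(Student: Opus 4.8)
The plan is to exploit the de Leeuw representation of $F$. By the Hahn--Banach extension recalled in \eqref{eq: F(f)=int_Gamma f dmu for every f in Lip_0(M)}, I would fix a $\mu\in ba(\widetilde M)$ with $|\mu|(\widetilde M)=1$ and $F(f)=\int_{\widetilde M}\widetilde f\,d\mu$ for every $f\in\Lip_0(M)$. Since $m_{u,v}$ acts on $\Lip_0(M)$ by $f\mapsto\widetilde f(u,v)$, one has $\|F+m_{u_n,v_n}\|=\sup_{\|f\|\le1}\bigl(\int_{\widetilde M}\widetilde f\,d\mu+\widetilde f(u_n,v_n)\bigr)\le 2$, so only the lower bound $\liminf_n\|F+m_{u_n,v_n}\|\ge 2$ requires proof.

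The first step is a reduction to a statement about a single steepened function. Restricting the supremum to those $f\in B_{\Lip_0(M)}$ that are maximally steep on the edge, i.e.\ $\widetilde f(u_n,v_n)=1$, gives at once
\[
\|F+m_{u_n,v_n}\|\ge 1+\sup\bigl\{F(f)\colon f\in B_{\Lip_0(M)},\ \widetilde f(u_n,v_n)=1\bigr\}.
\]
Since the constrained supremum is always at most $\sup_{B_{\Lip_0(M)}}F=1$, it suffices to show that it tends to $1$ as $n\to\infty$; in words, imposing full slope on the shrinking edge $(u_n,v_n)$ asymptotically costs nothing when maximizing $F$.

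To produce competitors for the constrained supremum I would start from a near-optimal $f_0\in B_{\Lip_0(M)}$ with $F(f_0)>1-\eps$ and steepen it locally. Because $d(u_n,v_n)\to0$, I can modify $f_0$ only on a neighbourhood of $\{u_n,v_n\}$ of radius $O(d(u_n,v_n))$, by anchoring a cone (tent) at $u_n$ and $v_n$, to obtain $f_n\in B_{\Lip_0(M)}$ with $\widetilde{f_n}(u_n,v_n)=1$, with $f_n=f_0$ off that neighbourhood, and with $\|f_n-f_0\|_\infty=O(d(u_n,v_n))$. I would then estimate $F(f_n)-F(f_0)=\int_{\widetilde M}(\widetilde{f_n}-\widetilde{f_0})\,d\mu$ by splitting $\widetilde M$ according to distance to the diagonal. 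On $\{(x,y)\colon d(x,y)\ge\eta\}$ the integrand is at most $2\|f_n-f_0\|_\infty/\eta=O(d(u_n,v_n)/\eta)\to0$, so this part is negligible; on the near-diagonal set $D_\eta=\{(x,y)\in\widetilde M\colon d(x,y)<\eta\}$ the integrand is bounded by $2$, leaving a contribution of at most $2|\mu|(D_\eta)$.

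The main obstacle is exactly this near-diagonal term: because $\mu$ is merely \emph{finitely} additive, $|\mu|(D_\eta)$ need not tend to $0$ as $\eta\to0$, so $\mu$ may concentrate mass at the diagonal. When it does not, the estimate closes immediately and the constrained supremum tends to $1$. When it does, such concentration means that $F$ behaves like a derivation at arbitrarily small scales, and the remedy I would pursue is to steepen \emph{below} that scale: keep $f_n$ equal to $f_0$ up to an additive constant on an inner ball $B(v_n,\eps_n)$ with $\eps_n\ll d(u_n,v_n)$, so that $\widetilde{f_n}=\widetilde{f_0}$ on the finest-scale pairs that carry the concentrated mass, and confine the genuine change to an intermediate range of scales. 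The one-dimensional model $M=[0,1]$---where $F$ is a point-derivative and one repairs the edge by inserting, just inside it, a slope-one feature finer than $d(u_n,v_n)$---is the guiding picture, and making this multiscale construction work against an arbitrary finitely additive $\mu$ is the step I expect to be the technical heart of the proof.
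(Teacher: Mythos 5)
There is a genuine gap, and you have located it yourself: the near-diagonal mass of the finitely additive representing measure. Your estimate splits $\widetilde M$ by distance to the diagonal, and then the term $2|\mu|(D_\eta)$ has no reason to be small; the ``multiscale'' remedy you sketch is left as ``the technical heart of the proof'' and is not actually carried out. The missing idea, which is how the paper's proof defeats diagonal concentration, is \emph{not} a scale-by-scale analysis of $|\mu|(D_\eta)$ but a pigeonhole argument on an infinite pairwise disjoint family of candidate modification regions: one produces sets $A_1,A_2,\dotsc\subset M$ that are pairwise disjoint (disjoint balls $B(u_n,r)$ when no subsequence of $(u_n)$ converges, and disjoint annuli $B(u,r)\setminus B(u,s)$ around the limit point otherwise), notes that the sets $\Gamma_{1,A_m}$ are then pairwise disjoint subsets of $\widetilde M$, and concludes from $|\mu|(\widetilde M)=1$ and finite additivity that all but finitely many satisfy $|\mu|(\Gamma_{1,A_m})<\delta$ and $|\mu|(\Gamma_{2,A_m})<\delta$. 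Modifying $h$ only on such an $A$ changes $\int\widetilde f\,d\mu$ by at most $2|\mu|(\Gamma_{1,A}\cup\Gamma_{2,A})<4\delta$, no matter how $\mu$ concentrates near the diagonal. Your nested neighbourhoods of $\{u_n,v_n\}$ of radius $O(d(u_n,v_n))$ do not have this disjointness when $u_n\to u$, which is exactly why the annuli (and the resulting three-case analysis on the behaviour of $(u_n)$) are needed.

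A second, smaller problem: you insist on exact slope $\widetilde{f_n}(u_n,v_n)=1$ while keeping $f_n=f_0$ off a small set and $\|f_n\|\le1$, starting from an arbitrary near-maximizer $f_0\in B_{\Lip_0(M)}$. This can be impossible (e.g.\ if $f_0$ already has slope close to $-1$ on nearby pairs); the construction needs slack. The paper takes $h\in S(F,2\delta)$ with $\|h\|\le 1-\delta$ and only asks for $f(u_n)-f(v_n)\ge(1-\delta)d(u_n,v_n)$, and even then verifying $\|f\|\le1$ after the local surgery requires a quantitative geometric lemma (Lemma \ref{lem: if y in B(u,theta r) and x in M setminus B(u,r)}) comparing $d(y,u)$ to $d(x,y)$ for $y$ deep inside and $x$ outside the ball. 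So your overall strategy (de Leeuw representation, local steepening, bounding the perturbation of $F$) is the right one and matches the paper's, but the two decisive ingredients --- choosing the surgery region from an infinite disjoint family rather than by scale, and building in the $\delta$ of Lipschitz slack --- are absent, and without them the argument does not close.
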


In the proof of Proposition \ref{prop: d(u_n,v_n)->0},
we shall repeatedly make use of the following simple lemma.

\begin{lemma}\label{lem: if y in B(u,theta r) and x in M setminus B(u,r)}
Let $M$ be a metric space, let $u,x,y\in M$, and let $r$ and $\theta$ be real numbers with $r>0$ and $0<\theta<1$.
Suppose that $y\in B(u,\theta r)$ and $x\in M\setminus B(u,r)$.
Then $d(y,u)\leq \frac{\theta}{1-\theta}\,d(x,y)$.
\end{lemma}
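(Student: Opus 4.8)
The plan is to prove this purely by the triangle inequality, since the statement is essentially a quantitative form of the observation that a point close to $u$ must be far (relative to its distance from $u$) from any point lying outside a larger ball around $u$. The two hypotheses give us $d(y,u)<\theta r$ (from $y\in B(u,\theta r)$) and $d(x,u)\geq r$ (from $x\in M\setminus B(u,r)$), and the conclusion is a single inequality relating $d(y,u)$ and $d(x,y)$.

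First I would apply the triangle inequality in the form $d(x,u)\leq d(x,y)+d(y,u)$, which rearranges to
\[
d(x,y)\geq d(x,u)-d(y,u).
\]
Using $d(x,u)\geq r$ gives $d(x,y)\geq r-d(y,u)$. The key substitution is then to replace $r$ using the first hypothesis: since $d(y,u)<\theta r$, one has $r>\tfrac{1}{\theta}\,d(y,u)$, whence
\[
d(x,y)\geq r-d(y,u)>\tfrac{1}{\theta}\,d(y,u)-d(y,u)=\tfrac{1-\theta}{\theta}\,d(y,u).
\]
Rearranging this last inequality yields $d(y,u)<\tfrac{\theta}{1-\theta}\,d(x,y)$, which is the desired bound (indeed with a strict inequality, so the claimed non-strict inequality holds a fortiori). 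Here the condition $0<\theta<1$ is exactly what guarantees that the factor $\tfrac{\theta}{1-\theta}$ is positive and that the step dividing by $1-\theta$ is legitimate.

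There is no genuine obstacle in this argument; the only point requiring a little care is the direction of the estimate when combining the two radius bounds, namely that the \emph{lower} bound $d(x,u)\geq r$ on the far point and the \emph{upper} bound $d(y,u)<\theta r$ on the near point cooperate to force $d(x,y)$ to dominate a fixed multiple of $d(y,u)$. Once the substitution $r>\tfrac{1}{\theta}\,d(y,u)$ is made correctly, the conclusion is immediate.
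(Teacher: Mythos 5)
Your argument is correct and is essentially the paper's own proof: both rest on the triangle inequality $d(x,u)\leq d(x,y)+d(y,u)$ combined with the bounds $d(x,u)\geq r$ and $d(y,u)<\theta r$, the paper phrasing the first step as a short contradiction to get $d(x,y)\geq(1-\theta)r$ while you substitute $r>\tfrac{1}{\theta}d(y,u)$ directly. No issues.
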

\begin{proof}
Observe that $d(x,y)\geq(1-\theta)r$
because otherwise one would have
\[
d(x,u)\leq d(x,y)+d(y,u)<(1-\theta)r+\theta r=r,
\]
a contradiction.
It follows that $d(y,u)<\theta r\leq\frac{\theta}{1-\theta}\,d(x,y)$, as desired.
\end{proof}

\begin{proof}[Proof of Proposition~\ref{prop: d(u_n,v_n)->0}]
Assume that $d(u_n, v_n)\to 0$. Let $\varepsilon>0$. Our aim is to prove that there exists an $n\in \N$ such that $\|F+m_{u_n,v_n}\|> 2-\varepsilon$. To that end, fix a real number $\delta>0$ satisfying $7\delta\leq \varepsilon$ and $h\in S(F, 2\delta)$ with $\|h\|\leq 1-\delta$. Let $\widetilde{M}$ be as in \eqref{eq: Gamma:=(M times M) setminus ((x,x): x in M)},
and let $\mu\in ba(\widetilde{M})$ with $|\mu|(\widetilde{M})=1$ satisfy \eqref{eq: F(f)=int_Gamma f dmu for every f in Lip_0(M)}. It suffices to show that there is a subset $A$ of $M$ with $|\mu(\Gamma_{1,A})|<\delta$ and $|\mu(\Gamma_{2,A})|<\delta$ such that, for some $n\in \N$, there exists a function $f\in B_{\Lip_0(M)}$ such that $f|_{M\setminus A} = h|_{M\setminus A}$ and $f(u_n)-f(v_n)\geq (1-\delta)d(u_n,v_n)$.
Indeed, suppose that such $A$, $n$, and $f$ have been found.
Then, setting $\Gamma_A=\Gamma_{1,A}\cup\Gamma_{2,A}$, one has $|\mu|(\Gamma_A)<2\delta$,
and thus (observing that $\widetilde{f}|_{\widetilde{M}\setminus\Gamma_A}=\widetilde{h}|_{\widetilde{M}\setminus\Gamma_A}$)
\begin{align*}
F(f)
&=\int_{\widetilde{M}}\widetilde{f}\,d\mu
%=\int_{\widetilde{M}\setminus\Gamma_A}\widetilde{g}\,d\mu+\int_{\Gamma_A}\widetilde{g}\,d\mu
=\int_{\widetilde{M}}\widetilde{h}\,d\mu
                       +\int_{\Gamma_A}\bigl(\widetilde{f}-\widetilde{h}\bigr)\,d\mu\\
&=\,F(h)+\int_{\Gamma_A}\bigl(\widetilde{f}-\widetilde{h}\bigr)\,d\mu\\
&>1-2\delta-2|\mu|(\Gamma_A)
\geq 1-6\delta,
\end{align*}
and, therefore,
\[
\|F+m_{u_n,v_n}\|
\geq F(f)+\frac{f(u_n)-f(v_n)}{d(u_n,v_n)}
>2-7\delta
\geq 2-\varepsilon.
\]

It remains to find the $A$, $n$, and $f$ as above.
To this end, choose a real number $\theta\in(0,1)$ satisfying $\frac{\theta}{1-\theta}<\frac{\delta}{2}$.
Without loss of generality, one may assume that one of the following (mutually exclusive) conditions holds:
\begin{itemize}
\item[\textup(1)]
no subsequence of the sequence $(u_n)_{n=1}^\infty$ converges;
\item[\textup(2)]
there is a $u\in M$ such that $u_n=u$ for every $n\in\N$;
\item[\textup(3)]
there is a $u\in M$ with $u_n\neq u$ and $v_n\neq u$ for every $n\in\N$ such that $u_n\to u$.
\end{itemize}

(1).
In this case, by passing to a subsequence, one may assume that
there is an $r>0$ such that the open balls $A_n\coloneq B(u_n,r)$, $n=1,2,\dotsc$, are pairwise disjoint.
It follows that there is an $N\in\N$ such that, for every $n\geq N$, one has
$|\mu|(\Gamma_{1,A_n})<\delta$ and $|\mu|(\Gamma_{2,A_n})<\delta$.
Pick an $n\geq N$ so that $d(u_n,v_n)<\theta r$.
One may assume that $0\not\in A_n$.
Let $A=A_n$ and define the function $f$ by $f|_{M\setminus A}=h|_{M\setminus A}$,
$f(u_n)=h(u_n)$, $f(v_n)=h(u_n)-(1-\delta) d(u_n,v_n)$, and by extending the definition norm-preservingly to the whole $M$.

Then $\|f\|\leq1$ because, whenever $x\in M\setminus A$, one has
(taking into account that $d(u_n,v_n)\leq\frac{\delta}{2}\,d(x,v_n)$
by Lemma \ref{lem: if y in B(u,theta r) and x in M setminus B(u,r)})
\begin{align*}
|f(x)-f(v_n)|
&=|h(x)-h(u_n)+(1-\delta)\,d(u_n,v_n)|\\
&\leq (1-\delta)\,d(x,u_n)+(1-\delta)\,d(u_n,v_n)\\
&\leq  (1-\delta)\,\bigl(d(x,v_n)+2\, d(u_n,v_n)\bigr) \leq d(x,v_n).
\end{align*}

\smallskip
(2).
%Whenever $i\in\{1,2\}$, observing that the limit $\lim_{r\to0+}|\mu|(\Gamma_{i,B(u,r)})=:l_i$ exists,
%there is a real number $r_i>0$ such that $|\mu|(\Gamma_{i,B(u,r_i)})<l_i+\delta$.
%Setting $r\coloneq\min\{r_1,r_2\}$, one has
%\begin{equation}\label{eq: |mu|(Gamma_(i,B_(r,s))<delta whenever i=1 or i=2, and 0<s<r}
%\text{$|\mu|(\Gamma_{1,A})<\delta$}
%\quad\text{and}\quad
%\text{$|\mu|(\Gamma_{2,A})<\delta$}
%\qquad\text{whenever $0<s<r$,}
%\end{equation}
%where $A:=B(u,r)\setminus B(u,s)$.
%Without loss of generality, 
%One may assume that $0\not\in B(u,r)\setminus\{u\}$.
%Pick an $n\in\N$ and an $s>0$ so that $d(u, v_n)<\theta r$ and $s<\theta d(u, v_n)$.
Pick an $n\in\N$ and $r,s>0$ so that $d(v_n, u)<\theta r$, $s<\theta d(v_n, u)$, 
and $|\mu|(\Gamma_{1,A})<\delta$ and $|\mu|(\Gamma_{2,A})<\delta$ where $A=B(u,r)\setminus B(u,s)$.
One may assume that $0\not\in B(u,r)\setminus\{u\}$.
Letting $A$ be as above,
define the function $f$ by $f|_{M\setminus A}=h|_{M\setminus A}$, $f(v_n)=h(u)-(1-\delta)\, d(u, v_n)$, and by extending the definition norm-preservingly to the whole $M$.
One has $\|f\|\leq1$.
In fact, if $x\in B(u,s)$, then
(taking into account that $d(x,u)\leq\frac{\delta}{2}\,d(x,v_n)$
by Lemma \ref{lem: if y in B(u,theta r) and x in M setminus B(u,r)})
\begin{align*}
|f(x)-f(v_n)|
&=|h(x)-h(u)+(1-\delta)\,d(u, v_n)|\\
&\leq (1-\delta)\,d(x,u)+(1-\delta)\,d(u,v_n)\\
&\leq  (1-\delta)\bigl(2d(x,u)+d(x,v_n)\bigr)\leq d(x,v_n);
\end{align*}
if $x\in M\setminus B(u,r)$, then, keeping in mind that $u=u_n$,
the desired inequality $|f(x)-f(v_n)|\leq d(x,v_n)$ is obtained as in the case~(1).

\smallskip
(3). Pick an $n\in\N$ and $r,s>0$ so that 
\[
\max\{d(u, u_n), d(u, v_n)\}<\theta r,\quad s<\theta \min\{ d(u, u_n), d(u, v_n)\},
\] 
and $|\mu|(\Gamma_{1,A})<\delta$ and $|\mu|(\Gamma_{2,A})<\delta$ where $A=B(u,r)\setminus B(u,s)$.
% In this case, as in the case (2), there is  a real number $r>0$
% satisfying \eqref{eq: |mu|(Gamma_(i,B_(r,s))<delta whenever i=1 or i=2, and 0<s<r}
% where $A:=B(u,r)\setminus B(u,s)$, and $0\not\in B(u,r)\setminus\{u\}$.
% %Pick an $n\geq N$, an $r>0$, and an $s>0$ so that
% Pick an $n\in N$ and an $s>0$ so that 
% \[
% \max\{d(u, u_n), d(u, v_n)\}<\theta r\quad\text{and}\quad s<\theta \min\{ d(u, u_n), d(u, v_n)\},
% \] 
%and, by setting $A=B(u,r)\setminus B(u,s)$, one has $|\mu|(\Gamma_{1,A})<\delta$ and %$|\mu|(\Gamma_{2,A})<\delta$.
%One may assume that $0\not\in B(u,r)\setminus\{u\}$.
Letting $A$ be as above,
define the function $f$ by $f|_{M\setminus A}=h|_{M\setminus A}$, $f(u_n)=h(u)+(1-\delta)\,d(u, u_n)$, $f(v_n)=f(u_n)-(1-\delta)\,d(u_n,v_n)$, 
and by extending the definition norm-preservingly to the whole $M$.
By calculations similar to those performed in the cases (1) and~(2),
one establishes that $\|f\|\leq1$.
\end{proof}

Recall that a metric space $M$ is \emph{local} if for every $\varepsilon>0$ and for every Lipschitz function $f\colon M\to \mathbb R$ there are two distinct points $u,v\in M$ such that
$d(u, v) < \varepsilon$ and $f(m_{u,v}) > \|f\| - \varepsilon$.
The following result is an immediate consequence of the previous theorem.

\begin{cor}[see {\cite[Proposition~3.4 and Theorem~3.5]{MR3794100}}, cf. {\cite[Theorem 3.1]{MR2379289}}]
Let $M$ be a pointed metric space. If $M$ is local, then $\Lip_0(M)$ has the Daugavet property.
\end{cor}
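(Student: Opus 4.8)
The plan is to deduce the corollary at once from Theorem~\ref{thm: If f in Lip_0(M) is local, then f is a Daugavet point} together with the standard description of the Daugavet property through Daugavet points. First I would record the elementary observation that, by definition, the locality of $M$ applied to an individual function forces \emph{every} $f\in S_{\Lip_0(M)}$ to be local: a norm-one element of $\Lip_0(M)$ is in particular a Lipschitz function on $M$, so the defining condition of ``$M$ local'' specialises to it and reads verbatim as the condition of ``$f$ local''. Hence, by Theorem~\ref{thm: If f in Lip_0(M) is local, then f is a Daugavet point}, every $f\in S_{\Lip_0(M)}$ is a Daugavet point.

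Next I would invoke the well-known fact that a Banach space $X$ has the Daugavet property if and only if every point of $S_X$ is a Daugavet point; here only the easy implication ``every point a Daugavet point $\Rightarrow$ Daugavet property'' is needed. To keep the argument self-contained I would phrase it in the slice form of the Daugavet property: fix $f\in S_{\Lip_0(M)}$, a slice $S(F,\alpha)$ of $B_{\Lip_0(M)}$ with $F\in S_{\Lip_0(M)^*}$, and $\varepsilon>0$. Since $-f$ is again a norm-one, hence local, function, it is a Daugavet point, so the slice $S(F,\alpha)$ contains a $g$ with $\|-f-g\|>2-\varepsilon$, that is, $\|f+g\|>2-\varepsilon$. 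As $f$, $S(F,\alpha)$, and $\varepsilon$ were arbitrary, this is precisely the Daugavet property of $\Lip_0(M)$, which finishes the proof.

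I do not expect any genuine obstacle, since the statement is an immediate corollary of the preceding theorem. The only matters requiring (routine) attention are the trivial specialisation turning ``$M$ local'' into ``every norm-one Lipschitz function is local'', and the bookkeeping that converts the Daugavet-point inequality $\|f-g\|>2-\varepsilon$ into the slice form $\|f+g\|>2-\varepsilon$ of the Daugavet property; the latter is handled simply by replacing $f$ with $-f$ and using that the class of norm-one (hence local) functions is symmetric.
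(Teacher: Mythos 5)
Your proposal is correct and matches the paper's intended argument: the paper derives this corollary as an immediate consequence of Theorem~\ref{thm: If f in Lip_0(M) is local, then f is a Daugavet point}, exactly as you do, and your two bookkeeping steps (locality of $M$ specialises to locality of each norm-one $f$, and passing from $f$ to $-f$ converts the Daugavet-point inequality into the slice form of the Daugavet property) are precisely the details the paper leaves implicit.
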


\begin{rem}
The converse statement of this result also holds (see \cite[Proposition 2.3 and the remark following its
proof]{MR2379289} and \cite[Theorem~3.5]{MR3794100}).
On the other hand, the converse statement of our Theorem~\ref{thm: If f in Lip_0(M) is local, then f is a Daugavet point} does not hold 
since there exist uniformly discrete metric spaces $M$ for which $\Lip_0(M)$ has Daugavet points. For example, let $M$ be an infinite pointed metric space where the distance between two different elements is $1$ if one of the elements is the fixed point $0$, and the distance between two different elements is $2$ in all other cases. Then the norm-one element $f\in\Lip_0(M)$, given by $f(x)=1$ for every $x\in M\setminus\{0\}$, is a Daugavet point.
\end{rem}

%%%%%%%%%%%%%%%%%%%%%%%%%%%%%%%%%%%%%%%%%%%%%%%%%%%%%%%%%%%%%%%%%%%%%%%%%%%%%%%%%%%%%%%%%%%%%%%%%%%%%%%%%%%%%%%%%%

\section*{Acknowledgment}
The authors wish to express their thanks to Triinu Veeorg for
her active interest in the publication of this paper.

This work was supported by the Estonian Research
Council grant (PRG1901). The research of Andre Ostrak was supported by the University of Tartu ASTRA Project PER ASPERA, financed by the European Regional Development Fund.

% ----------------------------------------------------------------
\bibliographystyle{amsplain}
\bibliography{references}
%\printbibliography

\end{document}